\documentclass[11pt,leqno]{article}
\usepackage{amsthm,amsfonts,amssymb,amsmath,oldgerm}
\usepackage{epsfig}
\numberwithin{equation}{section}
\usepackage[thinlines]{easybmat}


\setlength{\evensidemargin}{0in} \setlength{\oddsidemargin}{0in}
\setlength{\textwidth}{6in} \setlength{\topmargin}{0in}
\setlength{\textheight}{8in}



\renewcommand\a{\alpha}
\renewcommand\b{\beta}

\def\l{\lambda}


\renewcommand\a{\alpha}

\renewcommand\b{\beta}

\newcommand\C{\mathbb C}

\def\l{\lambda}

\newcommand\br{\begin{remark}}
\newcommand\er{\end{remark}}
\newcommand\bp{\begin{pmatrix}}
\newcommand\ep{\end{pmatrix}}
\newcommand\be{\begin{equation}}
\newcommand\ee{\end{equation}}
\newcommand\ba{\begin{equation}\begin{aligned}}
\newcommand\ea{\end{aligned}\end{equation}}
\newcommand\ds{\displaystyle}


\newcommand{\bap}{\begin{app}}
\newcommand{\eap}{\end{app}}
\newcommand{\begs}{\begin{exams}}
\newcommand{\eegs}{\end{exams}}
\newcommand{\beg}{\begin{example}}
\newcommand{\eeg}{\end{exaplem}}
\newcommand{\bpr}{\begin{proposition}}
\newcommand{\epr}{\end{proposition}}
\newcommand{\bt}{\begin{theorem}}
\newcommand{\et}{\end{theorem}}
\newcommand{\bc}{\begin{corollary}}
\newcommand{\ec}{\end{corollary}}
\newcommand{\bl}{\begin{lemma}}
\newcommand{\el}{\end{lemma}}
\newcommand{\bd}{\begin{definition}}
\newcommand{\ed}{\end{definition}}
\newcommand{\brs}{\begin{remarks}}
\newcommand{\ers}{\end{remarks}}

\newtheorem{theo}{Theorem}[section]

\newtheorem{exams}[theo]{Examples}

\numberwithin{equation}{section}


\newcommand{\RR}{{\mathbb R}}

\newcommand{\ZZ}{{\mathbb Z}}

\newcommand{\CC}{{\mathbb C}}

\newtheorem{theorem}{Theorem}[section]
\newtheorem{proposition}[theorem]{Proposition}
\newtheorem{corollary}[theorem]{Corollary}
\newtheorem{lemma}[theorem]{Lemma}
\newtheorem{definition}[theorem]{Definition}

\newtheorem{example}[theorem]{Example}
\newtheorem{remark}[theorem]{Remark}




\pagestyle{headings}










\newcommand{\beq}{\begin{equation}}
\newcommand{\eeq}{\end{equation}}
\newcommand{\bs}{\begin{split}}
\newcommand{\es}{\end{split}}




\title{Pointwise nonlinear stability of nonlocalized modulated periodic reaction-diffusion waves}

\author{\sc \small
Soyeun Jung \thanks{Kongju National University, Gongju-si, Chungcheongnamdo 314-701, Korea; soyjung@kongju.ac.kr: This work was supported by the research grant of the Kongju National University in 2015.}
~~~~~
Kevin Zumbrun \thanks{Indiana University, Bloomington, IN 47405, USA;
kzumbrun@indiana.edu
 }}

\begin{document}

\maketitle

\begin{abstract}
In this paper, extending previous results of \cite{J1}, we obtain pointwise nonlinear stability of periodic traveling reaction-diffusion waves, assuming spectral linearized stability, under nonlocalized perturbations. More precisely, we establish pointwise estimate of nonlocalized modulational perturbation under a small initial perturbation consisting of a nonlocalized modulation plus a localized perturbation decaying algebraically.

\end{abstract}

%

\section{Introduction } \label{introduction}
We consider a system of reaction-diffusion equations
\be\label{RD}
u_t=u_{xx}+f(u),
\ee
where $(x,t)\in \RR \times \RR^+,$  $u\in \RR^n$ and $f:\RR^n\rightarrow \RR^n$ sufficiently smooth. We assume that $u(x,t)=\bar{u}(x-ct)$ is a traveling wave solution of the system \eqref{RD} with a constant speed $c$ and the profile $\bar u(\cdot)$ satisfies $\bar u(\cdot)=\bar u(\cdot+1)$. In other words, $\bar u(x)$ is a stationary $1-$periodic solution of the PDE
\be\label{reaction-diffusion eq. 1}
u_t=u_{xx}+cu_x+f(u).
\ee
In \cite{J1}, the first author established pointwise Green function bounds on the linearized operator about the underlying solution $\bar u$ and obtained pointwise nonlinear stability of $\bar u$ by estimating the localized modultional perturbation $v(x,t)=\tilde u(x-\psi(x,t),t)-\bar u(x)$ $(h_0=\psi(x,0)=0)$ under small initial perturbations $v(x,0)=\tilde u(x,0)-\bar u(x)$ decaying algebraically for nearby solutions $\tilde u$ of \eqref{reaction-diffusion eq. 1}.

In the present paper, we study the pointwise nonlinear stability of $\bar u$ of \eqref{reaction-diffusion eq. 1} under small perturbations consisting of a nonlocalized modulation ($h_0(x)=\psi(x,0)$ does not decay algebraically, but $\partial_x h_0$ decays algebraically) plus a localized perturbation $v(x,0)=\tilde u(x-h_0(x),0)-\bar u(x)$ ($v(x,0)$ decays algebraically). Johnson, Noble, Rodrigues and Zumbrun showed $L^p$-nonlinear stability under such nonlocalized modulational perturbations ($h_0 \not \in L^1$, but $\partial_x h_0 \in L^1$) for systems of reaction-diffusion equations in \cite{JNRZ1} and of conservation laws in \cite{JNRZ3}. Sandsteds, Scheel, Schneider, and Uecker obtained similar results by rather different methods for systems of reaction-diffusion equations in \cite{SSSU}.

Similarly as in \cite{JNRZ1,JNRZ3}, here,  we determine an appropriate nonlocalized modulation $\psi(x,t)$ by an adaption of
the basic nonlinear iteration scheme developed in \cite{JZ}.
However, in the absence of cancellation estimates afforded by Hausdorff-Young and Parseval inequalities, we find it necessary to decompose the solution a bit differently than was done in \cite{JNRZ1} in order to estimate sharply
the key ``modulation'' part of the linearized solution operator in reponse to modulational-type data (see Remark \ref{difference}),
and to estimate this modulational part essentially ``by hand.''
This is the main new difficulty in our analysis beyond those carried out in \cite{J1,JNRZ1}.

\subsection{Preliminaries}

We first linearize the PDE \eqref{reaction-diffusion eq. 1} about a stationary $1-$periodic solution $\bar u$ so that we obtain the eigenvalue problem
\be\label{sp1}
\l v=Lv:=(\partial_x^2+c\partial_x+df(\bar{u}))v,
\ee
operating on $L^2(\RR)$ with densely defined domains $H^2(\RR)$. Here, $v$ is considered as a perturbation of $\bar u$ defined by $v(x,t)=\tilde u(x,t)-\bar u(x)$ for nearby solutions $\tilde u$. To characterize the $L^2(\RR)$-spectrum of $L$ (denoted by $\sigma_{L^2(\RR)}(L)$), we rewrite \eqref{sp1} as the following linear ODE system
\be\label{linear ODE system}
V_x=\mathbb{A}V,  \quad \text{where $V=\bp v \\ v_x \ep$ and  $\mathbb{A}=\mathbb{A}(x,\l)=\bp 0 & I \\ \l I-df(\bar u) & -cI \ep$}.
\ee
Since all coefficients of $L$ are $1$-periodic, $\mathbb{A}(x+1,\l)=\mathbb{A}(x, \l)$; so by Floquet theory, the fundamental matrix solution $\Phi(x,\l)$ of the linear system \eqref{linear ODE system} is
\be
\Phi(x,\l)=P(x,\l)e^{R(\l)x},
\notag
\ee
where $R(\l) \in \C^{n\times n}$ is a constant matrix and $P(x,\l) \in \CC^{n\times n}$ is a periodic matrix, $P(x,\l)=P(x+1,\l)$. In fact, for each eigenvalue $\mu$ (referred to as the Floquet exponent) of $R(\l)$, there is a solution to \eqref{linear ODE system} of the form $V(x,\l)=e^{\mu x}W(x, \l)$, where $W$ is $1$-periodic in $x$.  Thus, any non-trivial solution $V$ to the system \eqref{linear ODE system} does not lie in $L^2(\RR)$, which means that the $L^2(\RR)$-spectrum of the linear operator $L$ must be entirely essential. Moreover, $\l \in \sigma_{L^2(\RR)}(L) $ if and only if $R(\l)$ is not hyperbolic; thus there is a solution to \eqref{sp1} of the form $v=e^{i\xi x}w(x)$ for some neutral eigenvalue $i\xi \in \sigma(R(\l))$ and $1-$periodic function $w$. Here, $\xi \in [-\pi, \pi)$ is uniquely defined mod $2\pi$. Plugging $v(x)=e^{i\xi x}w(\xi, x)$ into \eqref{sp1}, we define the Bloch operators, for each $\xi \in [-\pi, \pi)$,
\be\label{BO}
L_{\xi}:=(\partial_x+i\xi)^2 + c(\partial_x+i\xi)+df(\bar u)
\ee
operating on $L^2_{per}([0,1])$ with densely defined domain $H^2_{per}([0,1])$. Indeed, $L_{\xi}$ satisfies $L_{\xi}w=\l(\xi)w$ and $w$ is 1-periodic in $x$. Moreover, for each $\xi\in [-\pi, \pi)$, $L_{\xi}$ has a compact resolvent and $\sigma_{L^2(\RR)}(L)= \ds \bigcup_{\xi \in [-\pi,\pi)} \sigma_{L^2_{per}([0,1])}(L_{\xi})$. In other words, for each $\xi \in [-\pi, \pi)$, $L_{\xi}$ operating on $L^2_{per}([0,1])$ has only  point spectrum, whereas $L$ operating on $L^2(\RR)$ has only essential spectrum.

\smallskip

We now define the standard diffusive spectral stability conditions (following \cite{S1, S2}).  We first notice that $0$ is an eigenvalue of $L_0$ because $L\bar u'=0$ and $\bar u'$ is 1-periodic. Throughout our analysis we assume the following conditions:

\smallskip

(D1) $\sigma_{L^2(\RR)}(L) \subset \{ \l \in \CC : R(\l) <0\}\cup \{0\}$.

\smallskip

(D2) $\l=0$ is a simple eigenvalue of $L_0$.

\smallskip

(D3) There exists a constant  $\theta>0$ such that $R\sigma(L_\xi) \leq -\theta |\xi|^2$ for all $\xi \in [-\pi, \pi)$.

\smallskip

\noindent As we mentioned above, $0$ is not an isolated eigenvalue of $L$ but a member of a continuous curve of essential spectrum, so there is no spectral gap between $0$ and the rest of the spectrum. This is the reason why we could not use the stability methods used for other types of traveling reaction diffusion waves such as front or pulse. This difficulty was overcome in Swift-Hohenberg equation in \cite{S1, S2} by using the above diffusive spectral stability. Moreover, from the above three conditions, the eigenvalue of $L_{\xi}$ is analytic at $\xi=0$; so the eigenvalue of $L_{\xi}$  bifurcating from $0$ at $\xi=0$ has the following expression (see \cite{J1, JNRZ1})
\be\label{eigenvalue from 0}
\l(\xi)=-ia\xi - b\xi^2 + O(|\xi|^3) \quad \text{for sufficiently small $|\xi|$,}
\ee
where $a \in \RR$ and $b>0$.

\subsection{Bloch transform}

We now recall the Bloch transform, as described for example in \cite{J1, JNRZ1, JNRZ2, JNRZ3}. By the inverse Fourier transform, we have for any $g \in L^2(\RR)$,
\be
g(x)= \frac{1}{2\pi}\int_{-\pi}^{\pi } e^{i\xi x}\check g(\xi,x) d\xi,
\notag
\ee
where $\check g(\xi,x) = \ds\sum_{j \in \ZZ}e^{i2\pi jx}\hat g(\xi+2\pi j)$ (referred to as the Bloch transform) and $\hat g(\cdot)$ denotes the Fourier transform of $g$ with respect to $x$. By the definition of $L_{\xi}$ in \eqref{BO}, $L(e^{i\xi x}f)=e^{i\xi x}(L_{\xi}f)$ for periodic functions $f$ and $\check g(\xi,x)$ is 1-periodic in $x$; so we have the Bloch solution formula for the linear operator $L$ in \eqref{sp1}
\be\label{inverse BF}
S(t)g(x):=e^{Lt}g(x)= \frac{1}{2\pi}\int_{-\pi }^{\pi } e^{i\xi x}e^{L_\xi t}\check g(\xi,x)
d\xi,
\ee
for any $g \in L^2(\RR)$.

\subsection{Main result} \label{main result}

With these preparations, we state the main theorem of this paper. Here, and throughout the paper, $h_{\infty}$ and $h_{-\infty}$ denote the end states of the initial modulation $h_0(x)$ as $x \rightarrow \infty$ and $x \rightarrow -\infty$, respectively, and $h_{\pm \infty}$ denotes a piecewise constant function defined by
\be\label{h infty}
h_{\pm \infty}(x):=\left\{
  \begin{array}{ll}
    h_{-\infty}, & \hbox{$x \leq 0$;} \\
    h_{+\infty}, & \hbox{$x>0$.}
  \end{array}
\right.
\ee

\begin{theorem}\label{main theorem}
 Suppose that a stationary 1-periodic solution $\bar u(x)$ of \eqref{reaction-diffusion eq. 1} satisfies spectral stability conditions $(D1)\sim(D3)$. For $r\geq\frac{3}{2}$ and sufficiently small $E_0>0$, we assume that the initial data $\tilde u_0(x)$ and $h_0(x)$ satisfy
 \be\label{initial data}
\begin{split}
 |\tilde u_0(x -h_0(x))-\bar u(x)| + & \sum_{k=1, 2} |\partial_x^k h_0(x)|+|h_0(x)-h_{\pm \infty}|  \leq E_0(1+|x|)^{-r}, \\
|h_{+\infty}|=|h_{-\infty}|\leq E_0  & \quad \text{and} \quad v_0:=\tilde u_0(x -h_0(x))- \bar u(x)  \in H^2(\RR).
\end{split}
\ee
Then for all initial data $\tilde u_0$ satisfying \eqref{initial data}, the corresponding solution $\tilde u(x,t)$ to \eqref{reaction-diffusion eq. 1} satisfies
\be\label{MR}
|\tilde u(x -\psi(x,t),t)-\bar u(x)|
 \leq CE_0\Big[(1+|x-at|+\sqrt t)^{-r} + (1+t)^{-\frac{1}{2}}e^{-\frac{|x-at|^2}{M't}}\Big]\\
\ee
and
\be
|\partial_x \psi(x,t)|, |\partial_t \psi(x,t)|
 \leq CE_0\Big[(1+|x-at|+\sqrt t)^{-r} + (1+t)^{-\frac{1}{2}}e^{-\frac{|x-at|^2}{M't}}\Big],
\ee
for an appropriate modulation $\psi(x,t) \in W^{2,\infty}$ with $\psi(x,0)=h_0(x)$ (which is determined in Section \ref{section NIS}). Here, $M'>M>0$ is a sufficiently large number  ($M$ denotes the constant in Theorem \ref{PGB}) and the constant $a$ is from \eqref{eigenvalue from 0}.
\end{theorem}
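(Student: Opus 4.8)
\emph{Proof strategy.}\ The plan is to run the nonlinear iteration scheme of \cite{JZ} in the modulational form of \cite{J1,JNRZ1}, but entirely at the level of pointwise Green-function bounds. Throughout write $\Theta(x,t):=(1+|x-at|+\sqrt t)^{-r}+(1+t)^{-\frac12}e^{-|x-at|^2/(M't)}$ for the template on the right-hand side of \eqref{MR}. First I would set up the nonlinear perturbation equations: putting $v(x,t):=\tilde u(x-\psi(x,t),t)-\bar u(x)$ and substituting into \eqref{reaction-diffusion eq. 1} gives a forced linear equation of the schematic form $(\partial_t-L)v=\partial_t\mathcal Q+\partial_x\mathcal R+\partial_x^2\mathcal S+\psi_t\,\mathcal T$, where $\mathcal Q,\mathcal R,\mathcal S,\mathcal T$ are smooth in $(\bar u,v)$ and at least quadratic in $(v,v_x,\psi_x,\psi_{xx},\psi_t)$; the structurally important point (as in \cite{JZ,JNRZ1}) is that $\psi$ enters only through its derivatives, so the forcing is localized even though $\psi$ is not. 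I would then invoke the pointwise Green-function bounds of \cite{J1} (Theorem \ref{PGB}) for the kernel $G(x,t;y)$ of $S(t)=e^{Lt}$, together with the decomposition $G=\bar u'(x)\,e(x,t;y)+\tilde G(x,t;y)$, in which the scalar kernel $e$ carries the slow modulational drift-diffusion attached to the zero eigenvalue of $L_0$ and the expansion \eqref{eigenvalue from 0}, while $\tilde G$ and the derivatives $\partial_{x,t,y}e$ decay strictly faster. Following \cite{JZ}, the modulation $\psi$ is then \emph{defined} implicitly as the fixed point of
\[
\psi(x,t)=\bigl[\text{scalar diffusive transport of }h_0\bigr](x,t)+\int_\RR e(x,t;y)\,v_0(y)\,dy+\int_0^t\!\!\int_\RR e(x,t-s;y)\,\bigl[\mathcal Q,\mathcal R,\mathcal S,\mathcal T\bigr](y,s)\,dy\,ds,
\]
normalized so that $\psi(\cdot,0)=h_0$; this is arranged so that the slowly-decaying $\bar u'e$-contributions cancel out of the Duhamel representation of $v$, leaving $v$ — and, differentiating the display, $\psi_x,\psi_t$ — governed by a closed system whose right-hand side involves only $\tilde G$ and $\partial_{x,t,y}e$ acting on the localized data $v_0$, on $\partial_x h_0$, and on the quadratic, localized forcing.

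\emph{Linear estimates.}\ The analytic core is then twofold. On the one hand, the standard pointwise convolution lemmas used in \cite{J1} give that $\tilde G$ and $\partial_{x,t}e$ convolved against localized data $(1+|y|)^{-r}$ produce $\lesssim\Theta(x,t)$, and that the Duhamel integral of the quadratic source — schematically a Green-kernel bound at time $t-s$, carrying one extra $x$-derivative, convolved against $\Theta(\cdot,s)^2$ — again closes at rate $\Theta(x,t)$; it is precisely here that the hypothesis $r\geq\frac32$ is used, since after the gained derivative the quadratic source decays at rate $2r-1$ and one needs $2r-1\geq2$ for the $s$-integral to converge and reproduce $\Theta$. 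On the other hand — and this is the genuinely new ingredient flagged in Remark \ref{difference} — one must bound pointwise the action of $e$ and its derivatives on the \emph{nonlocalized} modulational data inside the transport term, without the Hausdorff--Young/Parseval cancellation that does this automatically in the $L^p$ analysis of \cite{JNRZ1}. I would do this by hand: split $h_0=h_{\pm\infty}+(h_0-h_{\pm\infty})$, handle the localized remainder by the previous convolution lemmas, and for the step $h_{\pm\infty}$ integrate by parts once against $e$ (legitimate since $|\partial_x h_0|\leq E_0(1+|x|)^{-r}$ is integrable); the boundary contributions at $y=\pm\infty$ reproduce exactly the transported step $h_{\pm\infty}(x-at)$, which is bounded by $2E_0$ and whose $x$- and $t$-derivatives are $\lesssim E_0(1+t)^{-\frac12}e^{-|x-at|^2/(M't)}\lesssim E_0\Theta$, while the remaining integral has the localized density $\partial_x h_0$ and is controlled as before. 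This delivers the claimed bounds on $\psi_x,\psi_t$ and confirms $\psi\in W^{2,\infty}$ with $\psi(\cdot,0)=h_0$.

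\emph{Closing the iteration, and the main obstacle.}\ With these estimates in hand I would set
\[
\zeta(t):=\sup_{0\leq s\leq t,\ x\in\RR}\ \Theta(x,s)^{-1}\Bigl(|v(x,s)|+|\psi_x(x,s)|+|\psi_t(x,s)|\Bigr)
\]
(together with the short-time parabolic-smoothing and energy controls needed on $v_x,\psi_{xx}$), feed the closed integral system into the linear estimates above, and use quadraticity of the source to obtain $\zeta(t)\leq C(E_0+\zeta(t)^2)$ uniformly in $t$. Since $\zeta$ is continuous with $\zeta(0)\lesssim E_0$, a continuity argument then gives $\zeta(t)\leq2CE_0$ for all $t\geq0$ provided $E_0$ is sufficiently small, which is exactly \eqref{MR} and the stated bounds on $\psi_x,\psi_t$. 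The step I expect to be the main obstacle is precisely the by-hand pointwise control of the modulation part of the solution operator acting on the nonlocalized data: keeping it sharp — rate $\Theta$, no derivative loss, and the correct boundary/step contributions from the nonzero end states $h_{\pm\infty}$ — without the Fourier-side cancellations available in \cite{JNRZ1}; a secondary but real point is verifying that the implicitly defined $\psi$ is well posed in $W^{2,\infty}$ and meets $\psi(\cdot,0)=h_0$.
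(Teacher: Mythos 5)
Your overall architecture coincides with the paper's: the perturbation equations of \cite{JZ,JNRZ1}, the Green-function splitting $G=\bar u'E+\tilde G$ from Theorem \ref{PGB}, the definition of $\psi$ so as to cancel the slowly decaying parts of the Duhamel representation (compare \eqref{psi}--\eqref{v2}), the use of $r\geq\frac32$ to close the quadratic Duhamel tail, and the continuous-induction argument on $\zeta(t)\leq C(E_0+\zeta(t)^2)$ are all exactly what the paper does. The gap is in the one place you yourself flag as the main obstacle: the pointwise control of $\tilde S_*(t)(\bar u'h_0)=S(t)(\bar u'h_0)-\bar u's_*^p(t)(h_0)$, where $s_*^p(t)$ is the pure Gaussian transport absorbed into $\psi$. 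Your plan --- split $h_0=h_{\pm\infty}+(h_0-h_{\pm\infty})$, treat the remainder by convolution lemmas, and integrate the step by parts against $e$ --- does not control this residual. First, the remainder $h_0-h_{-\infty}$ (say) is \emph{not} localized: it tends to $h_{+\infty}-h_{-\infty}=2h_{+\infty}$ as $x\to+\infty$, so the term $E_0\int_0^\infty t^{-1/2}e^{-|x-y-at|^2/(Mt)}dy$ it produces is $O(E_0)$ with no decay whenever the Gaussian is centered in $y>0$; the paper's Proposition \ref{prop S bigger than Ct} can absorb this only in the outer regime $|x|\gg Ct$, where $|x-y|\gg Ct$ forces exponential smallness, and this is precisely why the hypothesis $|h_0-h_{\pm\infty}|\leq E_0(1+|x|)^{-r}$ appears in \eqref{initial data}. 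Your sketch neither uses this hypothesis nor explains why the argument survives in the inner regime $|x|\lesssim Ct$.

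Second, in that inner regime the residual is genuinely not ``$\tilde G$ plus derivatives of $e$ acting on $\partial_xh_0$.'' The kernel $E(x,t;y)$ of Theorem \ref{PGB} is the Gaussian weighted by $\tilde q(y,0)$, whereas $s_*^p$ is the unweighted Gaussian acting on $h_0$; their difference applied to $\bar u'h_0$ contains terms like a Gaussian kernel against $(\tilde q(y,0)\bar u'(y)-1)h_0(y)$, a bounded nonlocalized density with no derivative on $h_0$. Showing that every such piece can be rewritten as a kernel bounded by $(1+|x-y-at|+\sqrt t)^{-2}+t^{-1/2}e^{-|x-y-at|^2/(Mt)}$ acting on the localized density $\partial_yh_0$ is the content of Propositions \ref{prop tilde S1} and \ref{prop tilde S2}: one works on the Bloch side, extracts a factor $i(\xi+2\pi j)$ from each Fourier mode of the periodic weight to convert $\hat h_0(\xi+2\pi j)$ into $\widehat{\partial_xh_0}$, sums over $j$ by Cauchy--Schwarz to get $(1+|x|)^{-2}$ spatial localization of the resulting kernels, and uses complex contour shifts to recover the Gaussian factors (all of which is only valid for $|x|\ll Ct$, hence the two-regime structure). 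Your integration by parts against $e$ handles only the principal $j=0$ Gaussian part and leaves all of this --- the paper's main new technical contribution, cf.\ Remark \ref{difference} --- unaddressed. To repair the proposal you would need either to reproduce this ``by hand'' Bloch analysis or to supply an alternative derivation of the two bounds in Propositions \ref{prop tilde S1}--\ref{prop tilde S2} together with the outer-regime estimate of Proposition \ref{prop S bigger than Ct}.
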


\begin{remark}
This extends the result of \cite{J1} to nonlocalized modulations. If $h_0(x)$ decays algebraically, we obtain the same result as \cite{J1}.  Here, compared with the localized case \cite{J1}, one can see that nonlocalized modulated perturbations decay at a slower, heat kernel rate. Moreover, the initial data conditions \eqref{initial data} satisfy the conditions in \cite{JNRZ1}, so that integrating the bound in \eqref{MR} with respect to $x$ gives the same $L^p$-bound as in \cite{JNRZ1} which is $(1+t)^{-\frac{1}{2}(1-\frac{1}{p})}$ for all $2\leq p \leq \infty$.
\end{remark}

\begin{remark}\label{end states assumption}
Here, without loss of generality, one can assume $h_{-\infty}=-h_{+\infty}$. If not, $h_{-\infty}+c=-(h_{+\infty}+c) $ with $c=-\ds\frac{h_{+\infty}+h_{-\infty}}{2}$. This assumption is for handling nonlocalized functions in the Bloch transform framework.
\end{remark}

\begin{remark}
For the algebraically decaying data, $(1+|x|)^{-r}$ with $r>1$ is enough for linear stability. However, we need $r\geq\frac{3}{2}$ for nonlinear stability when we use the nonlinear iteration scheme in Section \ref{section NIS}.
\end{remark}

\subsection{Nonlinear perturbation equations and outline of the analysis} \label{Nonlinear Perturbation Equations}


We first look at the nonlinear equation of modulated perturbations of $\bar u$ to obtain the strategy of this paper. As mentioned in the previous section, we define the modulated perturbations
\be\label{definition of v}
v(x,t)=\tilde u(x-\psi(x,t),t)-\bar u(x)
\ee
for nearby solutions $\tilde u(x,t)$ to \eqref{reaction-diffusion eq. 1} and unknown function $\psi(x,t):\RR^2 \longrightarrow \RR$ with $\psi(x,0)=h_0(x)$ to be determined in Section \ref{section NIS}. This is exactly what we want to estimate in the main theorem; so, we first state the nonlinear perturbation equation about $v$ which is already established in \cite{JZ, JNRZ1}.

\begin{lemma}[Nonlinear perturbation equations, \cite{JZ, JNRZ1}] For $v$ defined in \eqref{definition of v} and the linear operator $L$ in \eqref{sp1}, we have
\be \label{nonlinear perturbation equation}
(\partial_t-L)v=-(\partial_t-L)\bar u'(x)\psi + Q+R_x+(\partial_x^2+\partial_t)Z+T,
\ee
where
\be \label{Q}
Q:=f(v(x,t)+\bar{u}(x))-f(\bar{u}(x))-df(\bar{u}(x))v=\mathcal{O}(|v|^2),
\ee
\be \label{R}
R:= -v\psi_t - v\psi_{xx}+  (\bar u_x +v_x)\frac{\psi_x^2}{1-\psi_x},
\ee
\be \label{S}
Z:= v\psi_x =O(|v| |\psi_x|),
\ee
and
\be \label{T}
T:=-\left(f(v+\bar{u})-f(\bar{u})\right)\psi_x=O(|v||\psi_x|).
\ee
\end{lemma}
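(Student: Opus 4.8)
\noindent\emph{Proof plan.} The asserted identity is an exact calculus identity, so the plan is a direct computation arranged to expose the conservative structure. Write $\breve u(x,t):=\tilde u(x-\psi(x,t),t)$, so that $\breve u=\bar u+v$ by \eqref{definition of v}. Since $\bar u$ is a stationary solution of \eqref{reaction-diffusion eq. 1}, the profile equation $\bar u''+c\bar u'+f(\bar u)=0$ holds, and differentiating it once gives $L\bar u'=0$ with $L$ as in \eqref{sp1}. First I would substitute $v=\breve u-\bar u$ into $(\partial_t-L)v$, use the profile equation to cancel $\bar u''+c\bar u'$, and Taylor expand $f$ about $\bar u$ to second order --- which is exactly the definition of $Q$ in \eqref{Q} --- to obtain
\be
(\partial_t-L)v=\big[(\partial_t-\partial_x^2-c\partial_x)\breve u-f(\breve u)\big]+Q.
\ee
So it remains to identify the bracket $N:=(\partial_t-\partial_x^2-c\partial_x)\breve u-f(\breve u)$ with $-(\partial_t-L)(\bar u'\psi)+R_x+(\partial_x^2+\partial_t)Z+T$.

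Next I would compute $N$ by the chain rule applied to $\breve u(x,t)=\tilde u(x-\psi,t)$. Writing $X=x-\psi$, one has $\breve u_x=(1-\psi_x)\tilde u_X$, $\breve u_{xx}=(1-\psi_x)^2\tilde u_{XX}-\psi_{xx}\tilde u_X$ and $\breve u_t=\tilde u_t-\psi_t\tilde u_X$, all $\tilde u$-derivatives evaluated at $(X,t)$; inserting $\tilde u_t=\tilde u_{XX}+c\tilde u_X+f(\tilde u)$ from \eqref{reaction-diffusion eq. 1} makes the nonlinearity cancel and leaves a pure ``flux'' expression
\be\label{fluxform}
N=(2\psi_x-\psi_x^2)\,\tilde u_{XX}+(c\psi_x+\psi_{xx}-\psi_t)\,\tilde u_X.
\ee
Differentiating $\breve u=\bar u+v$ then lets me replace $\tilde u_X=(\bar u'+v_x)/(1-\psi_x)$ and $\tilde u_{XX}=(\bar u''+v_{xx})/(1-\psi_x)^2+\psi_{xx}(\bar u'+v_x)/(1-\psi_x)^3$, after which I would expand the inverse powers via $\tfrac1{1-\psi_x}=1+\tfrac{\psi_x}{1-\psi_x}$, $\tfrac1{(1-\psi_x)^2}-1=\tfrac{2\psi_x-\psi_x^2}{(1-\psi_x)^2}$, and so on.

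The crux is the reorganization of \eqref{fluxform}. Its part that is \emph{linear} in the jet $(\psi,\psi_x,\psi_{xx},\psi_t)$, obtained by replacing $\tilde u_X\mapsto\bar u'$, $\tilde u_{XX}\mapsto\bar u''$ and dropping the quadratic $\psi_x^2$, equals $2\bar u''\psi_x+c\bar u'\psi_x+\bar u'\psi_{xx}-\bar u'\psi_t$; using $L\bar u'=0$, which gives $L(\bar u'\psi)=2\bar u''\psi_x+c\bar u'\psi_x+\bar u'\psi_{xx}$, this is exactly $-(\partial_t-L)(\bar u'\psi)$ --- the ``commutator'' term $2\bar u''\psi_x$ being precisely the contribution of $L$. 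Everything else is at least quadratic in $(v,\psi_x,\ldots)$: the $v_x$-contributions to $\tilde u_X,\tilde u_{XX}$ multiplied by $\psi_x,\psi_{xx},\psi_t$, the $\psi_x^2$ term, and all pieces carrying $(1-\psi_x)^{-1}$. I would collect these into the perfect $x$-derivative $R_x$ with $R$ as in \eqref{R}; into $(\partial_x^2+\partial_t)Z$ with $Z=v\psi_x$ as in \eqref{S}, the key cancellation being that the third-order terms $v\psi_{xxx}+v\psi_{xt}$ thrown off by $R_x$ are removed by $(\partial_x^2+\partial_t)(v\psi_x)$; and, after re-expressing $v_t$ (equivalently $\breve u_t$) through the equation it satisfies, into the reaction cross-term $T=-(f(\bar u+v)-f(\bar u))\psi_x$ of \eqref{T}.

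I expect the main obstacle to be exactly this final matching: verifying that the quadratic remainder of \eqref{fluxform} assembles \emph{without any leftover} into $R_x+(\partial_x^2+\partial_t)Z+T$ for the \emph{specific} $R,Z,T$ stated, since many terms can a priori be written as derivatives in more than one way and one must pick the grouping for which, in the later Duhamel/Green-function estimates, the $\partial_x$, $\partial_x^2$ and $\partial_t$ can be transferred onto the kernel. The structural inputs that make this go through are the translation invariance $L\bar u'=0$, the exact choice $Z=v\psi_x$, and the elementary partial-fraction identities for powers of $(1-\psi_x)^{-1}$; no single manipulation is deep, but the bookkeeping must be exact. (The identity is due to \cite{JZ,JNRZ1}; the above is how I would reconstruct it.)
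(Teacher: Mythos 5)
Your derivation is correct, and it is essentially the only derivation there is: the paper itself gives no proof of this lemma, quoting it directly from \cite{JZ,JNRZ1}, and your reconstruction matches the computation in those references (chain rule in $X=x-\psi$, the profile equation and $L\bar u'=0$ to produce $-(\partial_t-L)(\bar u'\psi)$, the PDE for $\tilde u$ substituted into $v_t\psi_x$ to generate $T$ and the residual $f(\bar u)\psi_x=-(\bar u''+c\bar u')\psi_x$). The final matching you flag as the main risk does close without leftover: using $(\bar u'+v_x)\frac{\psi_x^2}{1-\psi_x}=\psi_x^2\,\tilde u_X$, the coefficients of $\tilde u_{XX}$ and $\tilde u_X$ in $R_x+(\partial_x^2+\partial_t)Z+T-(\partial_t-L)(\bar u'\psi)$ sum exactly to $2\psi_x-\psi_x^2$ and $c\psi_x+\psi_{xx}-\psi_t$, with the $v\psi_{xxx}$ and $v\psi_{xt}$ terms cancelling between $R_x$ and $(\partial_x^2+\partial_t)Z$ just as you predict.
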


\smallskip

We now briefly give the plan of this paper. Setting $v_0(x)=\tilde u_0(x -h_0(x))-\bar u(x)$ and $\mathcal{N}(x,t)=(Q+R_x+(\partial_x^2+\partial_t)Z+T)(x,t)$ in \eqref{nonlinear perturbation equation},  by Duhamel's principle, we have
\be\label{v}
v(x,t)=-\bar u'(x) \psi(x,t)+ e^{Lt}(v_0+\bar u'h_0) + \int_0^t e^{L(t-s)}\mathcal{N}(s)ds.
\ee
For localized data $v_0$, in order to estimate $e^{Lt}v_0$, we use the pointwise Green function bounds obtained in \cite{J1}; so we first recall one of the main theorems of \cite{J1} in Section \ref{Linear estimates for the localized data}. Since we defined $\psi(x,t)$ with $h_0(x)=\psi(x,0)=0$ in \cite{J1}, the main new ingredient in this paper compared to \cite{J1} is the pointwise linear behavior under modulational data $\bar u'h_0$. In other works, the main difficulty here is to estimate $e^{Lt}(\bar u'h_0)$ in terms of the localized data $|\partial_x h_0|$, $|\partial_x^2 h_0|$ or $|h_0-h_{\pm \infty}|$ in Section \ref{Linear estimates for the nonlocalized data}. After we estimate the linear level, we define an appropriate $\psi(x,t)$ with $\psi(x,0)=h_0(x)$; so we finally obtain pointwise bounds of $v$ by the nonlinear iteration scheme in Section \ref{section NIS} and Section \ref{section NS}.


\subsection{Discussion and open problems}

Compared with \cite{JNRZ1} ($L^p$-stability estimates for nonlocalized modulations),  the assumptions $|\tilde u_0(x -h_0(x))-\bar u(x)|$, $\sum_{k=1, 2} |\partial_x^k h_0(x)| \leq E_0(1+|x|)^{-r} $ in \eqref{initial data} are very natural for pointwise estimates.
However, the assumptions on $h-h_{\pm\infty}$ might appear unfamiliar. The reason for these
is that it is still an open problem how to establish pointwise estimates on the linearized solution operator directly from the Bloch representation
\be
e^{Lt}g(x)= \frac{1}{2\pi}\int_{-\pi }^{\pi } e^{i\xi x}e^{L_\xi t}\check g(\xi,x)
d\xi,
\notag
\ee
for $|x|>>Ct$ (sufficiently large $C>0$) even for the localized data $g$.
These additional assumptions allow us to obtain estimates by a different route.
This is also the reason why we obtained pointwise Green function bounds $G(x,t;y)$ on the linear operator $L$ in \cite{J1, J2} (the cases of localized modulations), without use of the Bloch representation for $|x-y|>>Ct$.
Thus, if we could find out how to handle the Bloch solution operator for $|x|>>Ct$,
this would be a nice improvement both in Theorem \ref{main theorem}, and in the analysis of the previous work \cite{J1, J2}.

Another, very interesting,
open problem is to determine not only pointwise derivative decay of the modulation $\psi$, but also its pointwise behavior to lowest order,
similarly as done in the $L^p$ context in \cite{JNRZ2}.
Indeed, this might be a route also to the elimination of hypotheses on decay of $h-h_{\pm \infty}$, since subtracting off this principal behavior would
leave only localized terms more amenable to pointwise estimates.  However, as noted earlier, our definition of the phase, being adapted to the pointwise analysis,
is somewhat different from that in \cite{JNRZ1,JNRZ2}, and so we cannot immediately apply the earlier analysis to obtain such a result.


\section{Linear estimates for the localized data $v_0$}\label{Linear estimates for the localized data}

In this section, we recall the pointwise Green function bounds of the linear operator $L$ from \cite{J1}.
\begin{theorem}[Pointwise Green function bounds, \cite{J1}]\label{PGB}
The Green function $G(x,t;y)$ for the evolution equations $(\partial_t-L)v=0$ for linear operator \eqref{sp1} satisfies the estimates:
\be
G(x,t;y)=\bar u'(x)E(x,t;y)+\tilde G(x,t;y),
\notag
\ee
where
\be
E(x,t;y)=\frac{1}{\sqrt{4\pi bt}}e^{-\frac{|x-y-at|^2}{4bt}}\tilde q(y,0)\chi(t),
\notag
\ee

\be
|\tilde G(x,t;y)| \lesssim \Big((1+t)^{-1}+t^{-\frac{1}{2}}e^{-\eta t}\Big) e^{-\frac{|x-y-at|^2}{Mt}}
\notag
\ee
and
\be
|\tilde G_y(x,t;y)| \lesssim t^{-1} e^{-\frac{|x-y-at|^2}{Mt}},
\notag
\ee
uniformly on $t \geq 0$, for some sufficiently large constant $M>0$ and $\eta>0$. Here $\tilde q$ is the periodic left eigenfunction of $L_0$ at $\l=0$ and $\chi(t)$ is a smooth cutoff function such that  $\chi(t)=0$ for $0 \leq t \leq \frac{1}{2}$ and $\chi(t)=1$ for $t \geq 1$.
\end{theorem}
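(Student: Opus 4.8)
The plan is to combine two complementary representations of $G$: a Bloch--wave integral, efficient in the near field $|x-y-at|\lesssim Ct$, and an inverse-Laplace representation on a spatially adapted contour, needed in the far field $|x-y-at|\gg Ct$. In the near field one writes
\be
G(x,t;y)=\frac{1}{2\pi}\int_{-\pi}^{\pi}e^{i\xi(x-y)}G_\xi(x,y,t)\,d\xi ,
\notag
\ee
where $G_\xi$ is the periodic Green kernel of $e^{L_\xi t}$, and splits the $\xi$-integral into a low-frequency piece $|\xi|\le\xi_0$ and a high-frequency remainder via a smooth cutoff. On the low-frequency piece, (D2)--(D3) provide the spectral decomposition $e^{L_\xi t}=e^{\l(\xi)t}\Pi_\xi+(I-\Pi_\xi)e^{L_\xi t}$, where $\Pi_\xi$ is the rank-one projection onto the eigenvalue $\l(\xi)=-ia\xi-b\xi^2+O(|\xi|^3)$ bifurcating from $0$, with right eigenfunction $q(\xi,\cdot)$ normalized so that $q(0,\cdot)=\bar u'$, left eigenfunction $\tilde q(\xi,\cdot)$ whose value at $\xi=0$ is the periodic left null eigenfunction of $L_0$ appearing in $E$, and with $(I-\Pi_\xi)e^{L_\xi t}$ having kernel $\lesssim e^{-\eta t}$. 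Substituting $q(\xi,x)=\bar u'(x)+O(|\xi|)$ together with the expansion of $\l(\xi)$ into the leading term and completing the square in the scalar Gaussian $\frac{1}{2\pi}\int_{\RR}e^{i\xi(x-y-at)-b\xi^2 t}\,d\xi=\frac{1}{\sqrt{4\pi bt}}e^{-\frac{|x-y-at|^2}{4bt}}$ isolates exactly the term $\bar u'(x)E(x,t;y)$, with the cutoff $\chi(t)$ inserted to absorb the singularity as $t\to0^+$.

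Each remaining low-frequency contribution carries either one extra power of $\xi$ --- from $q(\xi,x)-\bar u'(x)$, from $\tilde q(\xi,y)-\tilde q(0,y)$, or from the $O(|\xi|^3)$ correction to $\l(\xi)$ --- or exponential-in-$t$ decay, from $(I-\Pi_\xi)e^{L_\xi t}$, from the endpoint $|\xi|=\xi_0$, and from extending the $\xi$-range to $\RR$; after the Gaussian $\xi$-integral each such term is bounded by $\big((1+t)^{-1}+t^{-1/2}e^{-\eta t}\big)e^{-\frac{|x-y-at|^2}{Mt}}$. The high-frequency piece $\xi_0\le|\xi|\le\pi$ is handled by (D1) and (D3), which force $\mathrm{Re}\,\sigma(L_\xi)\le-\eta<0$ on that range, so its kernel is $\lesssim t^{-1/2}e^{-\eta t}$ by parabolic smoothing. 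The $y$-derivative bound follows from the same computation: $\partial_y$ acts either on $e^{-i\xi y}$, producing one more factor of $\xi$ and hence $t^{-1}$ after the Gaussian integral, or harmlessly on $\tilde q(\xi,y)$; the leading Gaussian does not reappear in $\tilde G$ since it was already split off into $\bar u'(x)E$.

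For the far field, where the oscillatory Bloch integral yields no decay in $|x-y|$, I would instead start from $G(x,t;y)=\frac{1}{2\pi i}\int_{\Gamma}e^{\l t}G_\l(x,y)\,d\l$ with the resolvent kernel $G_\l(x,y)=(L-\l)^{-1}(x,y)$ constructed from the Floquet solutions of $V_x=\mathbb{A}(x,\l)V$. For $\l$ to the right of $\sigma_{L^2(\RR)}(L)$, consistent splitting of $\mathbb{A}(\cdot,\l)$ into growing and decaying subspaces gives $|G_\l(x,y)|\lesssim C(\l)\,e^{-\theta(\l)|x-y|}$; deforming $\Gamma$ through the saddle point $\l_*=\l(\xi_*)$ with $\xi_*=i\frac{x-y-at}{2bt}$ of the phase $\l t+i\xi(\l)(x-y)$ converts this spatial exponential into the Gaussian $e^{-\frac{|x-y-at|^2}{Mt}}$, and patching with the near-field estimate on the overlap region finishes the proof. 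I expect the main obstacle to be precisely this far-field contour analysis: one must control how both the decay rate $\theta(\l)$ and the resolvent bound $C(\l)$ degenerate as $\l\to 0$, where the stable/unstable splitting of $\mathbb{A}$ ceases to be hyperbolic, and choose the contour so that this degeneration is compensated by the diffusive factor inherited from the low-frequency Bloch analysis, uniformly for $t\ge 0$ including short times (absorbed by $\chi$). By contrast, the low-frequency Gaussian extraction is routine once the spectral perturbation expansion and the normalizations of $q$ and $\tilde q$ are fixed.
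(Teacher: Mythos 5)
You should first note that the paper does not prove this theorem at all: it is recalled verbatim from \cite{J1}, and the only original content surrounding it is the remark that inserting the cutoff $\chi(t)$ is harmless because the short-time part of the Gaussian can be absorbed into $\tilde G$. So the comparison is really with the proof in \cite{J1}. Your two-regime strategy is in fact the one used there, and the present paper's own discussion confirms it: the Bloch representation with the low-frequency decomposition $e^{L_\xi t}=e^{\l(\xi)t}\Pi_\xi+(I-\Pi_\xi)e^{L_\xi t}$ handles $|x-y|\lesssim Ct$, while for $|x-y|\gg Ct$ the authors state explicitly that the Bloch formula is unusable and that the bounds of \cite{J1} were obtained ``by a different route,'' namely the inverse-Laplace/resolvent representation built from the Floquet solutions of $V_x=\mathbb{A}(x,\l)V$, exactly as you propose. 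Your near-field computation --- extracting the exact Gaussian $\frac{1}{\sqrt{4\pi bt}}e^{-|x-y-at|^2/(4bt)}\tilde q(y,0)$ from the leading term and charging every remaining low-frequency contribution an extra power of $\xi$, hence $(1+t)^{-1}$ after the Gaussian $\xi$-integral, with the spatial Gaussian recovered by shifting the contour to $\xi+i(x-y-at)/(2bt)$, which is bounded precisely in this regime --- is the correct mechanism, and is the same device the present paper uses in Propositions \ref{prop tilde S1} and \ref{prop tilde S2}.

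The gap is that the far-field half, which is where essentially all of the work in \cite{J1} lies, is only announced, not carried out. Two points in particular need care. First, for $|x-y|\gg Ct$ the saddle of the phase sits at large $|\l|$ (your $\xi_*=i(x-y-at)/(2bt)$ is unbounded there), so what is actually needed are quantitative large-$|\l|$ resolvent bounds of the type $|G_{\xi,\l}(x,z)|\le C|\l|^{-1/2}$ and $|\partial_z G_{\xi,\l}(x,z)|\le C$ (which the present paper quotes from \cite{J1} inside the proof of Proposition \ref{prop tilde S2}), obtained from the asymptotic hyperbolic splitting of $\mathbb{A}(\cdot,\l)$; the degeneration near $\l=0$ that you flag as the main obstacle actually governs the matching with the near-field Bloch expansion on the overlap region, not the far-field decay itself. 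Second, the derivative bound $|\tilde G_y|\lesssim t^{-1}e^{-|x-y-at|^2/(Mt)}$ must also be established in the far-field regime, where $\partial_y$ falls on the Floquet/resolvent kernel rather than on $e^{-i\xi y}$, so the ``one extra factor of $\xi$'' argument does not apply there. Neither point is fatal to your plan, but as written the proposal is a correct outline of the argument of \cite{J1} rather than a proof.
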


\begin{remark}
In the main theorem in \cite{J1}, the Green function $G(x,t;y)$ has no cutoff function $\chi(t)$. However, there is no difference between Theorem \ref{PGB} and the original theorem in \cite{J1} because $(1-\chi(t))\ds\bar u'(x)\frac{1}{\sqrt{4\pi bt}}e^{-\frac{|x-y-at|^2}{4bt}} \tilde q(y,0)$ is included in $\tilde G(x,t;y)$.
\end{remark}

In \eqref{v}, for the localized data $v_0$, we estimate $e^{Lt}v_0$ as
\be
\begin{split}
(e^{Lt}v_0)(x)
&=\bar u'(x)\int_{-\infty}^{\infty} E(x,t;y)v_0(y)dy +  \int_{-\infty}^{\infty} \tilde G(x,t;y)v_0(y)dy. \\
\end{split}
\notag
\ee
Here, we assume algebraic decay of the initial localized data $v_0$; so we need to look at the linear behavior of $L$ under the algebraically decaying data which was completed in \cite{J1, HZ}.  We re-prove it here in the following lemma because it is used throughout this paper.

\begin{lemma}[\cite{J1}, \cite{HZ}] \label{algebraically decaying data} Let $r>1$. Then for any $x \in \RR$ and $t \geq 0$,
\be\label{agebraic decay}
\int_{-\infty}^\infty t^{-\frac{1}{2}}e^{-\frac{|x-y-at|^2}{bt}}(1+|y|)^{-r} dy
\lesssim (1+|x-at|+\sqrt t)^{-r}+(1+t)^{-\frac{1}{2}} e^{-\frac{|x-at|^2}{4bt}}.
\ee
\end{lemma}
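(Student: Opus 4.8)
The plan is to split the integration domain into the region close to the Gaussian center $y \approx x-at$ and the region far from it, matching each region to the appropriate term on the right-hand side. Concretely, I would write the integral as $\int_{|y-(x-at)|\le \frac12(1+|x-at|)} + \int_{|y-(x-at)|> \frac12(1+|x-at|)}$, calling these $I_1$ and $I_2$.

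\medskip

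\textbf{Estimate of $I_1$ (near region).} On $|y-(x-at)|\le \frac12(1+|x-at|)$ one has $1+|y| \gtrsim 1+|x-at|$ (by the triangle inequality, since $|y| \ge |x-at| - \frac12(1+|x-at|) \gtrsim |x-at|-\tfrac12$), so $(1+|y|)^{-r}\lesssim (1+|x-at|)^{-r}$. Pulling this factor out, what remains is $\lesssim (1+|x-at|)^{-r}\int_{\RR} t^{-1/2}e^{-|x-y-at|^2/(bt)}\,dy \lesssim (1+|x-at|)^{-r}$, using that the Gaussian integrates to a constant independent of $t$ after the substitution $z=(x-y-at)/\sqrt t$. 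To also capture the $\sqrt t$ in the exponent $(1+|x-at|+\sqrt t)^{-r}$, I would instead bound $(1+|y|)^{-r}$ on this set more carefully: either $|x-at|\ge \sqrt t$, in which case $(1+|x-at|)^{-r}\sim (1+|x-at|+\sqrt t)^{-r}$ directly; or $|x-at|<\sqrt t$, in which case I only keep the Gaussian, $I_1 \lesssim \int_{\RR} t^{-1/2}e^{-|x-y-at|^2/(bt)}(1+|y|)^{-r}\,dy$ restricted suitably, and use that on the effective support $|y|\lesssim \sqrt t$ one gets a factor $t^{-1/2}$ times a bounded quantity, i.e. $\lesssim (1+\sqrt t)^{-1} \lesssim (1+\sqrt t)^{-r}\cdot(\text{something})$... more robustly, one simply notes $I_1 \lesssim (1+|x-at|+\sqrt t)^{-r} + (1+t)^{-1/2}e^{-|x-at|^2/(4bt)}$ falls out by comparing the two cases, the second term absorbing the $|x-at|<\sqrt t$ case since there $e^{-|x-at|^2/(4bt)}\sim 1$ and $(1+t)^{-1/2}\gtrsim (1+|x-at|+\sqrt t)^{-r}$ fails only when $r$ is large, so actually the clean way is: in the near region bound $(1+|y|)^{-r}\le 1$ and estimate the Gaussian mass over $|y|\le C\sqrt t + C|x-at|$ by... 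I will present it as the dichotomy on whether $|x-at|\gtrsim \sqrt t$.

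\medskip

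\textbf{Estimate of $I_2$ (far region).} On $|y-(x-at)|>\frac12(1+|x-at|)$ the Gaussian is small: $e^{-|x-y-at|^2/(bt)} \le e^{-|x-y-at|^2/(2bt)}\,e^{-(1+|x-at|)^2/(8bt)}$. The second factor I bound by $(1+|x-at|+\sqrt t)^{-r}$ up to a constant, using the elementary inequality $e^{-\sigma^2/(Ct)}\lesssim_r (1+\sigma+\sqrt t)^{-r}$ valid for all $\sigma,t\ge 0$ (polynomials are dominated by exponentials; check separately $\sigma\lesssim\sqrt t$ and $\sigma\gtrsim\sqrt t$). The remaining factor $\int_{\RR} t^{-1/2}e^{-|x-y-at|^2/(2bt)}(1+|y|)^{-r}\,dy$ is bounded by $\int_{\RR} t^{-1/2}e^{-|x-y-at|^2/(2bt)}\,dy \lesssim 1$ since $(1+|y|)^{-r}\le 1$. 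This yields $I_2 \lesssim (1+|x-at|+\sqrt t)^{-r}$. (Alternatively one keeps half of the exponent to also recover a $(1+t)^{-1/2}e^{-|x-at|^2/(4bt)}$-type term, but it is not needed for the far region.)

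\medskip

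The main obstacle is the bookkeeping near the boundary between the two regimes — making sure the $\sqrt t$ inside $(1+|x-at|+\sqrt t)^{-r}$ is correctly produced and that the Gaussian term $(1+t)^{-1/2}e^{-|x-at|^2/(4bt)}$ is genuinely needed only to cover the small-$|x-at|$ (diffusive) case. The cleanest route, which I would follow, is the explicit dichotomy: if $|x-at|\ge \sqrt t$, show both $I_1$ and $I_2$ are $\lesssim (1+|x-at|)^{-r}\sim(1+|x-at|+\sqrt t)^{-r}$; if $|x-at|<\sqrt t$, show the whole integral is $\lesssim (1+t)^{-1/2}$, which is $\lesssim (1+t)^{-1/2}e^{-|x-at|^2/(4bt)}$ since the exponential is then bounded below by a positive constant. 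Everything else is the standard Gaussian substitution $z=(x-y-at)/\sqrt t$ together with the scalar estimate $\sup_{\sigma\ge0} (1+\sigma)^r e^{-\sigma^2/C}<\infty$.
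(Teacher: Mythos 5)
Your near-region estimate and your final dichotomy for the case $|x-at|<\sqrt t$ (bound the whole integral by $(1+t)^{-1/2}$ and absorb it into the Gaussian term) are correct and match the paper, which splits $\int_0^\infty t^{-1/2}e^{-||x-at|-y|^2/(bt)}(1+y)^{-r}dy$ into $[0,\tfrac{\sigma}{2}]\cup[\tfrac{\sigma}{2},2\sigma]\cup[2\sigma,\infty)$ with $\sigma=|x-at|$ and treats the middle region by the same $\sigma\lessgtr\sqrt t$ dichotomy. The genuine gap is in your far-region term $I_2$: the ``elementary inequality'' $e^{-\sigma^2/(Ct)}\lesssim_r(1+\sigma+\sqrt t)^{-r}$ is false — for $\sigma\le\sqrt t$ the left side is bounded below by a positive constant while the right side tends to $0$ as $t\to\infty$ — and your conclusion $I_2\lesssim(1+|x-at|+\sqrt t)^{-r}$ fails even in the regime $|x-at|\ge\sqrt t$ where you invoke it. Concretely, take $t=\sigma^2$ large: the far region contains $|y|\le 1$, where $(1+|y|)^{-r}\sim 1$ and $e^{-|x-y-at|^2/(bt)}\sim e^{-1/b}$, so that portion alone contributes $\sim t^{-1/2}=\sigma^{-1}$, which is \emph{not} $O((1+\sigma)^{-r})$ for $r>1$. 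It is, however, comparable to $(1+t)^{-1/2}e^{-\sigma^2/(4bt)}$.

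The fix is to match the far region to the \emph{second} term of the right-hand side rather than the first (your parenthetical ``not needed for the far region'' has it exactly backwards — the Gaussian term on the right exists precisely to absorb the far-region mass). On $|x-y-at|\ge\tfrac12(1+\sigma)$ keep the full exponent to extract $e^{-\sigma^2/(4bt)}$, bound the remainder by $t^{-1/2}\int(1+|y|)^{-r}dy\lesssim t^{-1/2}$ using $r>1$, and upgrade $t^{-1/2}$ to $(1+t)^{-1/2}$ for $t\le 1$ by the uniform boundedness of the whole left-hand side (this is the paper's treatment of its two outer intervals). With that replacement your argument closes.
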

\begin{proof}
If $x-at=0$, then it is trivial because
\be
\int_{-\infty}^\infty t^{-\frac{1}{2}}e^{-\frac{|x-y-at|^2}{bt}}(1+|y|)^{-r} dy \lesssim (1+t)^{-\frac{1}{2}} =  (1+t)^{-\frac{1}{2}} e^{-\frac{|x-at|^2}{bt}}.
\notag
\ee
We assume $x-at \neq 0$. Since $|x-y-at| \geq ||x-at|-|y||$,
\be
\begin{split}
\int_{-\infty}^\infty t^{-\frac{1}{2}}e^{-\frac{|x-y-at|^2}{bt}}(1+|y|)^{-r} dy
& \leq \int_{-\infty}^\infty t^{-\frac{1}{2}}e^{-\frac{||x-at|-|y||^2}{bt}}(1+|y|)^{-r} dy \\
& = 2\int_{0}^\infty t^{-\frac{1}{2}}e^{-\frac{||x-at|-|y||^2}{bt}}(1+y)^{-r} dy \\
& \approx \int_{0}^{\frac{|x-at|}{2}} (\cdots)+ \int_{\frac{|x-at|}{2}}^{2|x-at|} (\cdots)+ \int_{2|x-at|}^\infty (\cdots).
\notag
\end{split}
\ee
Noting first that the left-hand side of \eqref{agebraic decay} and $\ds\int_{0}^{\infty} (1+y)^{-r} dy$ with $r>1$  are bounded,
\be
\int_{0}^{\frac{|x-at|}{2}} (\cdots) + \int_{2|x-at|}^\infty (\cdots) \lesssim (1+t)^{-\frac{1}{2}} e^{-\frac{|x-at|^2}{4bt}} \int_{0}^{\infty} (1+|y|)^{-r} dy
\lesssim (1+t)^{-\frac{1}{2}} e^{-\frac{|x-at|^2}{4bt}}.
\notag
\ee
We now estimate $\ds \int_{\frac{|x-at|}{2}}^{2|x-at|} (\cdots)$ in two cases. If $|x-at| \leq \sqrt t$, then
$$e^{-\frac{|x-at|^2}{4bt}} = e^{-\frac{1}{4b}(\frac{|x-at|}{\sqrt t})^2} \geq e^{-\frac{1}{4b}} > 0; $$
so
\be
\int_{\frac{|x-at|}{2}}^{2|x-at|} (\cdots) \lesssim (1+t)^{-\frac{1}{2}} \lesssim (1+t)^{-\frac{1}{2}} e^{-\frac{|x-at|^2}{4bt}}.
\notag
\ee
If $|x-at| > \sqrt t$, then
\be
\int_{\frac{|x-at|}{2}}^{2|x-at|} (\cdots) \lesssim (1+|x-at|)^{-r} \lesssim (1+2|x-at|)^{-r} \lesssim  (1+|x-at|+\sqrt t)^{-r}.
\notag
\ee

\end{proof}

\section{Linear estimates for nonlocalized modulational data $\bar u'h_0$}\label{Linear estimates for the nonlocalized data}

For the modulational data $\bar u'h_0$, we recall \eqref{inverse BF} and decompose the solution operator $S(t)$ into
\be
S(t)=S_*^p(t) + \tilde S_*(t), \quad S_*^p(t)=\bar u's_*^p(t)
\notag
\ee
with
\be
s^p_*(t)(\bar u'h_0):=\int_{-\infty}^{\infty} e^{i\xi x}e^{(-ia\xi-b\xi^2)t} \hat{h_0}(\xi) d\xi
\notag
\ee
and
\be
\tilde S_*(t)(\bar u'h_0)=(S(t)- S_*^p(t))(\bar u'h_0).
\notag
\ee
We re-express $s_*^p(\bar u' h_0)$ as $\ds s_*^p(\bar u'h_0)
=IFT \Big(e^{(-ia\xi-b\xi^2)t} \Big)* h_0= \int_{-\infty}^\infty (4\pi bt)^{-\frac{1}{2}} e^{-\frac{|x-y-at|^2}{4bt}}h_0(y)dy$. Here, $IFT$ denotes the inverse Fourier transform and $*$ denotes the convolution. Similarly, we have
\be\label{first x derivative of psi}
\begin{split}
\Big|\partial_x s^p_*(t)(\bar u'h_0)\Big|
& =\Big|\int_{-\infty}^{\infty} e^{i\xi x}e^{(-ia\xi-b\xi^2)t} (i\xi \hat{h_0})(\xi) d\xi \Big|\\
& = \Big|\int_{-\infty}^{\infty} e^{i\xi x}e^{(-ia\xi-b\xi^2)t} \widehat{\partial_x h_0}(\xi) d\xi\Big| \\
& \leq \int_{-\infty}^\infty (4\pi bt)^{-\frac{1}{2}} e^{-\frac{|x-y-at|^2}{4bt}}|\partial_y h_0(y)|dy,
\end{split}
\ee

\be\label{second x derivative of psi}
\begin{split}
\Big| \partial_x^2 s^p_*(t)(\bar u'h_0)\Big|
& = \Big|\int_{-\infty}^{\infty} e^{i\xi x}e^{(-ia\xi-b\xi^2)t}(i\xi)^2 \hat{h_0}(\xi) d\xi \Big|\\
& \leq \int_{-\infty}^\infty (4\pi bt)^{-\frac{1}{2}} e^{-\frac{|x-y-at|^2}{4bt}}|\partial_y^2 h_0(y)|dy,
\end{split}
\ee
and
\be\label{first t derivative of psi}
\begin{split}
\Big|\partial_t s^p_*(t)(\bar u'h_0) \Big|
& = \Big|\int_{-\infty}^{\infty} e^{i\xi x}e^{(-ia\xi-b\xi^2)t} (-ia\xi-b\xi^2) \hat{h_0}(\xi) d\xi \Big| \\
& \lesssim \int_{-\infty}^\infty (4\pi bt)^{-\frac{1}{2}} e^{-\frac{|x-y-at|^2}{4bt}}\Big(|\partial_y h_0(y)|+|\partial_y^2 h_0(y)|\Big)dy.
\end{split}
\ee


\bigskip

We estimate $(\tilde S_*(t)(\bar u'h_0))(x) $ separately in two cases $|x| >> Ct$ and $|x|<<Ct$ for sufficiently large $C>0$.
Recalling \eqref{h infty}, we begin by estimating $(\tilde S_*(t)(\bar u'h_0))(x)$ for $|x|>>Ct$ by using the fact that $\bar u'h_{-\infty} $ and $\bar u'h_{+\infty}$ are stationary solutions of $S(t)$.

\bigskip

\begin{proposition}\label{prop S bigger than Ct}
Suppose $|(h_0-h_{\pm \infty})(x)| \leq E_0(1+|x|)^{-r}$  and $|h_{+\infty}|=|h_{-\infty}| \leq E_0$ for $r>1$ and sufficiently small $E_0>0$. Then if $|x|>>Ct$ for sufficiently large $C>0$,
\be
|(\tilde S_*(t)(\bar u' h_0))(x)| \lesssim E_0 \Big[ (1+|x-at|+\sqrt t)^{-r}+(1+t)^{-\frac{1}{2}}e^{-\frac{|x-at|^2}{M't}}   \Big ],
\notag
\ee
for a sufficiently large number $M'(>M>0)$ ($M$ denotes the constant in Theorem \ref{PGB}).
\end{proposition}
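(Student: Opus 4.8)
The plan is to split the modulational datum into a ``jump'' piece carrying the nonzero limits and an algebraically localized remainder,
\[
\bar u' h_0 \;=\; \bar u'\,h_{\pm\infty}(x)\;+\;\bar u'\,(h_0-h_{\pm\infty})(x),
\]
with $h_{\pm\infty}(x)$ the step function of \eqref{h infty}. The remainder $g_1:=\bar u'(h_0-h_{\pm\infty})$ satisfies $|g_1(x)|\le CE_0(1+|x|)^{-r}$ by hypothesis, so it is treated exactly like the localized datum $v_0$: writing $S(t)g_1$ via the Green function decomposition $G=\bar u'E+\tilde G$ of Theorem~\ref{PGB}, and $S_*^p(t)g_1=\bar u'\,s_*^p(t)(h_0-h_{\pm\infty})$ via the explicit Gaussian formula, one applies Lemma~\ref{algebraically decaying data} (with $bt$ replaced by $4bt$ or $Mt$, and using $(1+t)^{-1}\sqrt t,\ e^{-\eta t}\lesssim(1+t)^{-1/2}$) to conclude that $\tilde S_*(t)g_1=S(t)g_1-S_*^p(t)g_1$ obeys the stated bound for \emph{every} $x$, not merely for $|x|\gg Ct$. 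Thus only $\tilde S_*(t)(\bar u'h_{\pm\infty})$ remains to be controlled in the outer region.

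For the jump piece I would use that $\bar u'$ times a constant is a stationary solution: since $L\bar u'=0$, $S(t)(\bar u'h_{+\infty})=\bar u'h_{+\infty}$, and because the heat-type kernel has mass one, $S_*^p(t)(\bar u'h_{+\infty})=\bar u'h_{+\infty}$ as well; likewise with $h_{-\infty}$. Using Remark~\ref{end states assumption} to assume $h_{-\infty}=-h_{+\infty}$, for $x\gg Ct$ I write $\bar u'h_{\pm\infty}=\bar u'h_{+\infty}-2h_{+\infty}\,\bar u'\mathbf{1}_{\{\cdot\le 0\}}$, whence
\[
\tilde S_*(t)(\bar u'h_{\pm\infty})=-2h_{+\infty}\,\tilde S_*(t)\bigl(\bar u'\mathbf{1}_{\{\cdot\le 0\}}\bigr),
\]
and symmetrically $\tilde S_*(t)(\bar u'h_{\pm\infty})=2h_{+\infty}\,\tilde S_*(t)(\bar u'\mathbf{1}_{\{\cdot>0\}})$ for $x\ll -Ct$. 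Since $|h_{+\infty}|\le E_0$, it suffices to bound $\tilde S_*(t)(\bar u'\mathbf{1}_{\{\cdot\le 0\}})(x)$ by $C(1+t)^{-1/2}e^{-|x-at|^2/(M't)}$ when $x\gg Ct$, the other case being identical.

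The point is that \emph{no cancellation} between the $E$ part of $G$ and the parabolic term $s_*^p$ is needed here — in the outer region each is already exponentially small. For $C$ fixed large enough one has $|x-at|\ge\tfrac12|x|\ge\tfrac12 Ct>0$; then for $y\le0$, $x-y-at=(x-at)+|y|\ge0$, so $|x-y-at|^2\ge|x-at|^2+|y|^2$. Inserting this into $S(t)(\bar u'\mathbf{1}_{\{\cdot\le0\}})(x)=\int_{-\infty}^0 G(x,t;y)\bar u'(y)\,dy$ and into $S_*^p(t)(\bar u'\mathbf{1}_{\{\cdot\le0\}})(x)=\bar u'(x)\int_{-\infty}^0(4\pi bt)^{-1/2}e^{-|x-y-at|^2/(4bt)}\,dy$, the Gaussian integrals over $y<0$ are each $\lesssim e^{-|x-at|^2/(4bt)}$ (using $\|\bar u'\|_\infty,\|\tilde q\|_\infty<\infty$), while the $\tilde G$-contribution is $\lesssim\bigl((1+t)^{-1}\sqrt t+e^{-\eta t}\bigr)e^{-|x-at|^2/(Mt)}\lesssim(1+t)^{-1/2}e^{-|x-at|^2/(Mt)}$. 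Finally, writing $\tfrac1{4b}=\tfrac1{M'}+(\tfrac1{4b}-\tfrac1{M'})$ with $M'>4b$ and using $|x-at|^2/t\ge C^2t/4$, one gets $e^{-|x-at|^2/(4bt)}\le e^{-\kappa t}e^{-|x-at|^2/(M't)}\lesssim(1+t)^{-1/2}e^{-|x-at|^2/(M't)}$ for some $\kappa=\kappa(C,M')>0$, and $e^{-|x-at|^2/(Mt)}\le e^{-|x-at|^2/(M't)}$ since $M'>M$; combining with the localized part yields the claim.

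The main obstacle is conceptual rather than computational: one must give meaning to $S(t)$ and $s_*^p(t)$ on the bounded but non-integrable data $\bar u'h_{\pm\infty}$ and $\bar u'\mathbf{1}_{\{\cdot\le 0\}}$, which is done through the Green-function and Gaussian-convolution representations together with the stationarity identity $S(t)\bar u'=\bar u'$. Once the problem is reduced to the indicator $\bar u'\mathbf{1}_{\{\cdot\le0\}}$ and restricted to $|x|\gg Ct$, the remaining work is routine Gaussian estimation; the only real bookkeeping subtlety is the normalization $h_{-\infty}=-h_{+\infty}$ of Remark~\ref{end states assumption}, which is exactly what lets the single indicator computation cover both tails.
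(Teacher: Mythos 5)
Your proof is correct and follows essentially the same route as the paper: both arguments exploit that $\bar u'$ times a constant is stationary for $S(t)$ and $S_*^p(t)$ so as to reduce to a localized datum (handled by Lemma \ref{algebraically decaying data}) plus a half-line indicator piece supported on the far side of $x$, which is then killed by the exponential smallness of the Gaussian kernels in the region $|x|\gg Ct$. The only difference is bookkeeping — the paper subtracts the single constant $h_{\mp\infty}$ appropriate to the sign of $x$ and absorbs the resulting jump on the opposite half-line directly, whereas you subtract the step function $h_{\pm\infty}(\cdot)$ globally and then rewrite the step as constant minus indicator — and the two decompositions are algebraically identical.
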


\begin{proof}
Let us first consider $x \leq 0 $. Since $\bar u'h_{-\infty}$ is a stationary solution of $S(t)$, $S(t)(\bar u'h_{-\infty})=\bar u'h_{-\infty}=S_*^p(t)(\bar u'h_{-\infty})$; and so  $\tilde S_*(t)(\bar u' h_0)
= S(t)(\bar u' (h_0-h_{-\infty}))-S_*^p(t)(\bar u' (h_0-h_{-\infty}))$.
From the pointwise bounds on the Green function of $S(t)$ in Theorem \ref{PGB},
\be
\begin{split}
& |S(t)(\bar u' (h_0-h_{-\infty}))| + |S_*^p(t)(\bar u' (h_0-h_{-\infty}))|  \\
& \lesssim  \int_{-\infty}^{0} t^{-\frac{1}{2}}e^{-\frac{|x-y-at|^2}{Mt}} |(h_0-h_{-\infty})(y)|dy  + \int_{0}^{\infty} t^{-\frac{1}{2}}e^{-\frac{|x-y-at|^2}{Mt}} |(h_0-h_{-\infty})(y)|dy  \\
& \lesssim  \int_{-\infty}^{\infty} t^{-\frac{1}{2}}e^{-\frac{|x-y-at|^2}{Mt}}|(h_0-h_{\pm \infty})(y)|dy +  E_0\int_{0}^{\infty} t^{-\frac{1}{2}}e^{-\frac{|x-y-at|^2}{Mt}}dy.\\
\end{split}
\notag
\ee
The first term is done from the assumption $|(h_0-h_{\pm \infty})(x)| \leq E_0(1+|x|)^{-r}$, $r>1$ and Lemma \ref{agebraic decay}. Since $x<<-Ct$ and $y\geq 0$, $|x-y|>>Ct$; and so
\be \label{inequalities}
\frac{|x-y-at|}{2t} \leq \frac{|x-y|}{t} \leq \frac{2|x-y-at|}{t},
\ee
for sufficiently large $C>0$. Thus,
\be
\begin{split}
\int_{0}^{\infty} t^{-\frac{1}{2}}e^{-\frac{|x-y-at|^2}{Mt}}dy \leq \int_{0}^{\infty} t^{-\frac{1}{2}}e^{-\frac{|x-y|^2}{4Mt}}dy \leq e^{-\eta t} e^{-\frac{|x|^2}{16Mt}} \int_{0}^{\infty} t^{-\frac{1}{2}}e^{-\frac{|x-y|^2}{16Mt}}dy \leq e^{-\eta t}e^{\frac{|x-at|^2}{M't}},
\end{split}
\notag
\ee
for some positive constant $M'>16M>0$.  Here, the last inequality is from the fact $|x|>>Ct$ and \eqref{inequalities} again. Similarly, we argue the case $x>0$ with a stationary solution $\bar u'h_{+\infty}$ instead of $\bar u'h_{-\infty}$.
\end{proof}

For the case $|x|<<Ct$, we first denote the right and left eigenfuctions of $L_{\xi}$ corresponding to $\l(\xi)$ by $q(\xi,x)$  and $\tilde q(\xi,x)$, respectively, for sufficiently small $|\xi|$. In particular, $q(0,x)=\bar u'(x)$ since $L_0\bar u'=0$. Moreover, for sufficiently small $|\xi|$, let $\Pi(\xi)(\cdot)=q(\xi)\langle\tilde q(\xi), \cdot \rangle_{L^2([0,1])}$ which is the eigenprojection onto the right-eigenspace, span$\{q(\xi)\}$ and  set $\tilde \Pi(\xi)=\text{I} - \Pi(\xi)$.  In order to estimate $\tilde S_*(t)$ for $|x|>>Ct$, we decompose again  $\tilde S_*(t)$ into $\tilde S_1(t)$ and $\tilde S_2(t)$ with
\be
\tilde S_1(t)(\bar u'h_0) :=\int_{-\pi}^{\pi} e^{i\xi x}e^{\l(\xi)t}\a(\xi)\langle \tilde\phi(\xi,\cdot), \bar{u}^\prime \check h_0(\xi,\cdot) \rangle_{L^2[0,1]} d\xi
- \int_{-\infty}^{\infty} e^{i\xi x}e^{(-ia\xi-b\xi^2)t} \hat{h_0}(\xi) d\xi
\notag
\ee
and
\be\label{tilde S2}
\begin{split}
\tilde S_2(t)(\bar u'h_0)
& := \int_{-\pi}^{\pi} e^{i\xi x}(1-\a(\xi)) e^{L_{\xi}t}( \bar u'\check{h_0}(\xi,x))d\xi \\
& \quad + \int_{-\pi}^{\pi} e^{i\xi x}\a(\xi)\tilde \Pi(\xi) e^{L_{\xi}t} ( \bar u'\check{h_0}(\xi,x))d\xi \\
& \quad +  \int_{-\pi}^{\pi} e^{i\xi x}\a(\xi)e^{\l(\xi)t}(q(\xi,x)-q(0,x)) \langle \tilde q(\xi,\cdot),  \bar u'\check{h_0}(\xi,\cdot)\rangle_{L^2[0,1]} d\xi, \\
\end{split}
\ee
where $\a(\xi)$ is a smooth cutoff function such that $\a(\xi)=1$ for sufficiently small $|\xi|$.

\begin{remark}\label{difference}
Comparing our decomposition of $S(t)=\bar u' s_*^p(t)+\tilde S^p(t)=\bar u' s_*^p(t) + \tilde S_1(t)+\tilde S_2(t)$ with the decomposition of $S(t)=\bar u' s^p(t) + S^p(t)$ in \cite{JNRZ1}, we see that $\bar u' s_*^p(t) + \tilde S_1(t) = \bar u' s^p(t) $ and $\tilde S_2(t)=S^p(t)$. Here, we set $s^p_*$ as the principal, Gaussian, part of the worst term at $j=0$ in
\be
\begin{split}
(s^p(t)(\bar u'h_0))(x)
=& \int_{-\pi}^{\pi} e^{i\xi x}e^{\l(\xi)t}\a(\xi)\langle \tilde\phi(\xi,\cdot), \bar{u}^\prime \check h_0(\xi,\cdot) \rangle_{L^2[0,1]} d\xi \\
= & \sum_{j \in \ZZ}\int_{-\pi}^{\pi}e^{i\xi x}e^{\l(\xi)t}\a(\xi)\langle \tilde\phi(\xi,\cdot)\bar{u}^\prime(\cdot), e^{i2\pi j\cdot}\rangle_{L^2[0,1]} \hat h_0(\xi+2\pi j) d\xi.
\end{split}
\notag
\ee
omitting the cutoff function $\a(\xi)$. This is to estimate $S(t)$ for $|x|>>Ct$ by using the stationary solutions $\bar u'h_+$ and $\bar u'h_-$ in Proposition \ref{prop S bigger than Ct}. Actually, our decomposition might work also in the analysis of \cite{JNRZ1}.
\end{remark}

\begin{proposition} \label{prop tilde S1}
Suppose $|\partial_x h_0(x)| \leq E_0(1+|x|)^{-r}$ for $r>1$ and sufficiently small $E_0>0$. Then if $|x|<<Ct$ for sufficiently large $C>0$,
\be
\begin{split}
|\tilde S_1(t)(\bar u' h_0)|
& \lesssim  \int_{-\infty}^{\infty} \Big[ (1+|x-y-at|+\sqrt t)^{-2}+t^{-\frac{1}{2}}e^{-\frac{|x-y-at|^2}{Mt}} \Big] |\partial_y h_0(y)| dy \\
& \lesssim E_0 \Big[ (1+|x-at|+\sqrt t)^{-r}+(1+t)^{-\frac{1}{2}}e^{-\frac{|x-at|^2}{M't}}   \Big ],
\notag
\end{split}
\ee
for sufficiently large $M'>M>0$.
\end{proposition}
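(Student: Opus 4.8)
\emph{Plan of proof.} The plan is to establish the pointwise kernel bound (the first inequality) and then deduce the second by inserting $|\partial_y h_0(y)|\le E_0(1+|y|)^{-r}$: Lemma \ref{algebraically decaying data} handles the Gaussian piece, and the algebraic piece follows from the elementary convolution bound $\int_{\RR}(1+|z-y|+\sqrt t)^{-2}(1+|y|)^{-r}\,dy\lesssim(1+|z|+\sqrt t)^{-\min(r,2)}+(1+t)^{-1/2}e^{-|z|^2/M't}$. By definition $\tilde S_1(t)(\bar u'h_0)=\bar u'\,\bigl(s^p(t)-s^p_*(t)\bigr)(\bar u'h_0)$; expanding the Bloch pairing over the dual lattice as in Remark \ref{difference},
\[
\bigl(s^p(t)(\bar u'h_0)\bigr)(x)=\sum_{j\in\ZZ}\int_{-\pi}^{\pi}e^{i\xi x}e^{\l(\xi)t}\a(\xi)\,c_j(\xi)\,\hat h_0(\xi+2\pi j)\,d\xi,\qquad c_j(\xi):=\langle\tilde\phi(\xi,\cdot)\bar u'(\cdot),\,e^{i2\pi j\cdot}\rangle_{L^2[0,1]},
\]
while $\bigl(s^p_*(t)(\bar u'h_0)\bigr)(x)=\int_{\RR}e^{i\xi x}e^{(-ia\xi-b\xi^2)t}\hat h_0(\xi)\,d\xi$. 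I would split $\tilde S_1=\tilde S_1^{(0)}+\tilde S_1^{(\ne0)}$ into the $j=0$ lattice mode (minus the whole of $s^p_*$) and the remaining $j\ne0$ modes, and throughout replace $\hat h_0(\xi+2\pi j)$ by $\widehat{\partial_x h_0}(\xi+2\pi j)/\bigl(i(\xi+2\pi j)\bigr)$. This substitution is the essential device: it trades the nonlocalized data $h_0$ for the integrable $\partial_x h_0$, and the factor $1/(i\xi)$ it introduces at $j=0$ is exactly cancelled by the first-order vanishing at $\xi=0$ of the symbol differences below (so no principal value is needed); with the normalization $h_{-\infty}=-h_{+\infty}$ of Remark \ref{end states assumption} it is legitimate, $\widehat{\partial_x h_0}$ being the ordinary continuous Fourier transform of $\partial_x h_0\in L^1$.

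For the $j\ne0$ modes one has $|\xi+2\pi j|\ge\pi$ on the range of integration, so $c_j(\xi)/\bigl(i(\xi+2\pi j)\bigr)$ is bounded, and since the $c_j(\xi)$ are Fourier coefficients of a function smooth in $x$ and analytic in $\xi$ near $0$, the series $g(\xi,y):=\sum_{j\ne0}c_j(\xi)e^{-i2\pi jy}/\bigl(i(\xi+2\pi j)\bigr)$ converges to a function that is $1$-periodic in $y$, $C^\infty$ in $\xi$ on $\supp\a$, analytic in $\xi$ on the neighbourhood of $0$ where $\a\equiv1$, with bounded derivatives. Writing out $\widehat{\partial_x h_0}$ and interchanging sum and integral, $\tilde S_1^{(\ne0)}(t)(\bar u'h_0)(x)=\bar u'(x)\int_{\RR}K(x,y,t)\,\partial_y h_0(y)\,dy$ with
\[
K(x,y,t)=\frac1{2\pi}\int_{-\pi}^{\pi}e^{i\xi z}\,e^{(\l(\xi)+ia\xi)t}\,\a(\xi)\,g(\xi,y)\,d\xi,\qquad z:=x-y-at,
\]
and $\l(\xi)+ia\xi=-b\xi^2+O(|\xi|^3)$ on $\supp\a$. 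I would bound $K$ pointwise ``by hand'': on the neighbourhood of $\xi=0$ where $\a\equiv1$ the integrand is analytic in $\xi$, so deforming $\xi\mapsto\xi+i\sigma$ with $\sigma\sim z/t$ (truncated at the width of the analyticity strip) gives heat-kernel decay $\lesssim t^{-1/2}e^{-|z|^2/Mt}$; on the complementary range with $|\xi|$ bounded away from $0$ one has $|e^{(\l(\xi)+ia\xi)t}|\le e^{-\theta|\xi|^2t}\le e^{-\eta t}$ by (D3), and repeated integration by parts in $\xi$ against $e^{i\xi z}$ (legitimate since $\a$ is compactly supported in $(-\pi,\pi)$, using $|\xi|^{2}t\,e^{-\theta|\xi|^2 t}\lesssim1$ to absorb the $t$-growth produced by differentiating $e^{(\l+ia\xi)t}$) gives $\lesssim e^{-\eta t}(1+|z|)^{-N}$ for any $N$. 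Combining, $|K(x,y,t)|\lesssim t^{-1/2}e^{-|z|^2/Mt}+(1+|z|+\sqrt t)^{-2}$.

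For the $j=0$ mode I would compare $e^{\l(\xi)t}\a(\xi)c_0(\xi)$ on $[-\pi,\pi]$ with $e^{(-ia\xi-b\xi^2)t}$ on $\RR$, both paired against $\hat h_0(\xi)=\widehat{\partial_x h_0}(\xi)/(i\xi)$. Since $q(0,\cdot)=\bar u'$ and $\langle\tilde\phi(\xi,\cdot),q(\xi,\cdot)\rangle\equiv1$, one has $c_0(0)=\langle\tilde\phi(0,\cdot),\bar u'\rangle=1$, hence $\a(\xi)c_0(\xi)-1=O(|\xi|)$ near $0$; together with $\l(\xi)+ia\xi+b\xi^2=O(|\xi|^3)$ this gives
\[
e^{\l(\xi)t}\a(\xi)c_0(\xi)-e^{(-ia\xi-b\xi^2)t}=e^{\l(\xi)t}\bigl(\a(\xi)c_0(\xi)-1\bigr)+\bigl(e^{\l(\xi)t}-e^{(-ia\xi-b\xi^2)t}\bigr),
\]
the first summand being $O(|\xi|)\,e^{-\theta|\xi|^2t}$ and the second $O\bigl(\min(|\xi|^3t,1)\bigr)e^{-\theta|\xi|^2t}$ (from $|e^{w_1}-e^{w_2}|\le|w_1-w_2|\max(e^{\Re w_1},e^{\Re w_2})$ and (D3)); after division by $i\xi$ the resulting symbol is $\lesssim e^{-\theta'|\xi|^2t}$ and, being analytic near $\xi=0$, has inverse Fourier transform in $z=x-at$ (the factor $e^{-ia\xi t}$ producing the shift by $at$) bounded by $t^{-1/2}e^{-|z|^2/Mt}$; this contributes $\lesssim\int t^{-1/2}e^{-|x-y-at|^2/Mt}|\partial_y h_0(y)|\,dy$ to $\tilde S_1$. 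The pieces where $\a<1$, and the tail $|\xi|>\pi$ of $s^p_*$, each carry a factor $e^{-\eta t}$ and, after one integration by parts in $\xi$, are bounded operators on $\partial_x h_0$ times $e^{-\eta t}$, absorbable in the same term. Summing the $j=0$ and $j\ne0$ contributions gives the first displayed bound, and then the second.

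The main obstacle is precisely this pointwise control of the modulational kernels without the Hausdorff--Young and Parseval cancellations of \cite{JNRZ1}: because $\a$ and the periodic weight $g(\xi,y)$ are only $C^\infty$ and not globally analytic one cannot deform the $\xi$-contour everywhere, and the crude combination of the trivial $(1+t)^{-1/2}$ bound with integration-by-parts powers of $|z|^{-1}$ is not sharp near the transition $|z|\sim\sqrt t$; one must isolate and treat exactly the analytic part of each symbol near $\xi=0$, and the constant value $g(0,y)$, in order to extract a genuine Gaussian. This is the ``estimate by hand'' of the modulational part of the solution operator singled out in the Introduction as the new difficulty beyond \cite{J1,JNRZ1}.
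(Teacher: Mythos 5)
Your proposal follows essentially the same route as the paper's proof: the same substitution $\hat h_0(\xi+2\pi j)=\widehat{\partial_x h_0}(\xi+2\pi j)/(i(\xi+2\pi j))$, the same cancellation of the resulting $1/(i\xi)$ at $j=0$ against the $O(|\xi|)$ vanishing of $\a(\xi)c_0(\xi)-1$ and $e^{\l(\xi)t}-e^{(-ia\xi-b\xi^2)t}$ (the paper's terms $II$--$IV$), and the same combination of a contour deformation $\xi\mapsto\xi+i(x-at)/(2bt)$ near $\xi=0$ with the conversion $e^{-\eta t}\lesssim e^{-|x-at|^2/Mt}$ (valid since $|x|\ll Ct$) away from it. The only real difference is technical and minor: for the $j\neq 0$ modes the paper extracts the algebraic term $(1+|x-y-at|+\sqrt t)^{-2}$ from the spatial decay $\sum_{j\neq 0}|\b_j(x)|\lesssim (1+|x|)^{-2}$ of the periodic-weight kernels (proved via $\widehat{x^2\b_j}=\partial_\xi^2\hat\b_j$ and Cauchy--Schwarz over Fourier coefficients), whereas you obtain it by repeated integration by parts in $\xi$ on the non-analytic part of the symbol -- both are valid and lead to the same kernel bound.
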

\begin{proof}
Recalling $\check h_0(\xi,x) = \ds\sum_{j \in \ZZ}e^{i2\pi jx}\hat h_0(\xi+2\pi j)$, $\tilde S_1(t)$ is separated into four parts
\be
\begin{split}
\tilde S_1(t)(\bar u'h_0)
& = \int_{-\pi}^{\pi} e^{i\xi x}e^{\l(\xi)t}\a(\xi)\langle \tilde\phi(\xi,\cdot), \bar{u}^\prime \check h_0(\xi,\cdot) \rangle_{L^2[0,1]} d\xi
- \int_{-\infty}^{\infty} e^{i\xi x}e^{(-ia\xi-b\xi^2)t} \hat{h_0}(\xi) d\xi \\
& = \ds\sum_{j \in \ZZ\setminus\{0\}}\int_{-\pi}^{\pi}e^{i\xi x}e^{\l(\xi)t}\a(\xi)\langle \tilde\phi(\xi,\cdot)\bar{u}^\prime(\cdot), e^{i2\pi j\cdot}\rangle_{L^2[0,1]} \hat h_0(\xi+2\pi j) d\xi \\
& \qquad + \int_{-\pi}^{\pi}e^{i\xi x}e^{\l(\xi)t}\a(\xi)\langle \tilde\phi(\xi,\cdot)-\tilde \phi(0,\cdot), \bar u'\rangle_{L^2[0,1]} \hat h_0(\xi) d\xi \\
& \qquad + \int_{-\pi}^{\pi}e^{i\xi x}e^{(-ia\xi-b\xi^2)t}O(|\xi|^3t)\a(\xi)\hat h_0(\xi)d\xi \\
& \qquad + \int_{-\infty}^{\infty} e^{i\xi x}e^{(-ia\xi-b\xi^2)t}(\a(\xi)-1)\hat h_0(\xi)d\xi  \\
&  = I+ II+ III+ IV.
\notag
\end{split}
\ee

\textbf{Estimate I.} We first notice that by \eqref{eigenvalue from 0},
\be
e^{\l(\xi)t}=e^{-ia\xi-b\xi^2}e^{O(|\xi|^3)t}=e^{-ia\xi-b\xi^2}(1+O(|\xi|^3t)).
\notag
\ee
We separate again $I$ into two parts.
\be
\begin{split}
I
& =\ds\sum_{j \in \ZZ\setminus\{0\}}\int_{-\pi}^{\pi}e^{i\xi x}e^{\l(\xi)t}\a(\xi)\langle \tilde\phi(\xi,\cdot)\bar{u}^\prime(\cdot), e^{i2\pi j\cdot}\rangle_{L^2[0,1]} \hat h_0(\xi+2\pi j) d\xi \\
& = \ds\sum_{j \in \ZZ\setminus\{0\}}\int_{-\pi}^{\pi}e^{i\xi x}e^{\l(\xi)t}\frac{1}{\xi+2\pi j}\a(\xi)\widehat{\tilde{\phi}(\xi)\bar u'_j}^*\widehat{e^{i2\pi jx}\partial_xh_0(x)}(\xi) d\xi \\
& = \ds\sum_{j \in \ZZ\setminus\{0\}}\int_{-\pi}^{\pi}e^{i\xi x}e^{(-ia\xi -b\xi^2)t}\frac{1}{\xi+2\pi j}\a(\xi)\widehat{\tilde{\phi}(\xi)\bar u'_j}^*\widehat{e^{i2\pi jx}\partial_xh_0(x)}(\xi) d\xi \\
& \quad + \ds\sum_{j \in \ZZ\setminus\{0\}}\int_{-\pi}^{\pi}e^{i\xi x}e^{(-ia\xi -b\xi^2)t}O(|\xi|^3 t)\frac{1}{\xi+2\pi j}\a(\xi)\widehat{\tilde{\phi}(\xi)\bar u'_j}^*\widehat{e^{i2\pi jx}\partial_xh_0(x)}(\xi) d\xi \\
& = A + B.
\end{split}
\notag
\ee
 Here, $\widehat{\tilde{\phi}(\xi)\bar u'_j}^*$ denotes the complex conjugate of $j$-th Fourier coefficient of the $1$-periodic function $\tilde{\phi}(\xi)\bar u'_j$. In order to estimate $A$, we first set
\be
\hat{\b}_j:= \frac{1}{\xi+2\pi j}\a(\xi)\widehat{\tilde{\phi}(\xi)\bar u'_j}^*
\notag
\ee
which is bounded and in Schwartz class. Then we estimate $A$ as
\be
\begin{split}
|A|
& \leq \sum_{j \in \ZZ\setminus\{0\}} \Big|IFT(e^{(-ia\xi-b\xi^2)t})* \b_j * e^{-i2\pi j\cdot}\partial_x h_0 \Big|\\
& \leq  \Big|IFT(e^{(-ia\xi-b\xi^2)t}) \Big| * \sum_{j \in \ZZ\setminus\{0\}}|\b_j | * \Big| e^{-i2\pi j\cdot}\partial_x h_0 \Big| \\
& \approx \int_{-\infty}^\infty \frac{1}{\sqrt{4\pi bt}}e^{-\frac{|x-y-at|^2}{4bt}} \ds\sum_{j \in \ZZ\setminus\{0\}}|\b_j(y)|dy * |\partial_x h_0(x)|; \\
\end{split}
\notag
\ee
so it is enough to estimate $\ds\sum_{j \in \ZZ\setminus\{0\}}|\b_j(x)|$. Noting that
\be
|\widehat{x^2\b_j(x)}(\xi)|
=|\partial_{\xi}^2\hat{\b_j}(\xi)|
\leq \sum_{k=0}^2 \frac{1}{1+|j|}|\widehat{\partial_{\xi}^k \tilde{\phi}(\xi)\bar u_j'}|
\notag
\ee
and recalling $\hat{\b_j}(\xi)$ has a smooth cut-off function $\a(\xi)$, we have
\be
|x^2\b_j(x)|
=\Big| \int_{-\infty}^\infty e^{i\xi x}\widehat{x^2\b_j(x)}(\xi) d\xi \Big|
\leq \int_{|\xi|\leq \epsilon} | \partial_{\xi}^2\hat{\b_j}(\xi)| d\xi
\leq \sup_{|\xi|\leq \epsilon} \left(\sum_{k=0}^2 \frac{1}{1+|j|}|\widehat{\partial_{\xi}^k \tilde{\phi}(\xi)\bar u_j'}|\right).
\notag
\ee
Since $\partial_{\xi}\tilde{\phi}(\xi,\cdot)\bar u'(\cdot)$ and $\partial_{\xi}^2\tilde{\phi}(\xi,\cdot)\bar u'(\cdot)$ are also periodic, $\widehat{\partial_{\xi}\tilde{\phi}(\xi)\bar u'_j}$ and $\widehat{\partial_{\xi}^2\tilde{\phi}(\xi)\bar u'_j}$ are Fourier coefficients of $\partial_{\xi}\tilde{\phi}(\xi)\bar u'$ and $\partial_{\xi}^2\tilde{\phi}(\xi)\bar u'$, respectively. For each $k=0,1,2$, by using the Cauchy-Schwarz estimate,
\be
\begin{split}
\sum_{j\in \ZZ} \frac{1}{1+|j|} |\widehat{\partial_{\xi}^k\tilde{\phi}(\xi)\bar u'_j}|
& \leq \sqrt{ \sum_{j\in \ZZ} (1+|j|)^{-2} \sum_{j\in \ZZ}|\widehat{\partial_{\xi}^k\tilde{\phi}(\xi)\bar u'_j}|^2} \\
& \leq C\parallel\partial_{\xi}^k\tilde{\phi}(\xi)\bar u'\parallel_{L^2([0,1])};
\end{split}
\notag
\ee
and so
\be
\begin{split}
\ds\sum_{j \in \ZZ\setminus\{0\}}|\b_j(x)|
& \leq (1+|x|)^{-2}\sup_{|\xi|\leq \epsilon}\sum_{j \in \ZZ}\left(\sum_{k=0}^2 \frac{1}{1+|j|}|\widehat{\partial_{\xi}^k \tilde{\phi}(\xi)\bar u_j'}|\right) \\
& \leq (1+|x|)^{-2}\sup_{|\xi|\leq \epsilon} \sum_{k=0}^{2}\parallel \partial_{\xi}^k\tilde{\phi}(\xi)\bar u'\parallel_{L^2[0,1])} \\
& \leq C(1+|x|)^{-2}.
\notag
\end{split}
\ee
Thus, by Lemma \ref{algebraically decaying data},
\be
\begin{split}
|A|
& \lesssim  \Big[ (1+|x-at|+\sqrt t)^{-2}+(1+t)^{-\frac{1}{2}}e^{-\frac{|x-at|^2}{16bt}} \Big]* |\partial_xh_0(x)| \\
& = \int_{-\infty}^{\infty} \Big[ (1+|x-y-at|+\sqrt t)^{-2}+(1+t)^{-\frac{1}{2}}e^{-\frac{|x-y-at|^2}{16bt}} \Big] |\partial_y h_0(y)| dy.  \\
\end{split}
\notag
\ee
If $|\partial_y h_0(y)| \leq (1+|y|)^{-r}$ with $r>1$,  then
\be
|A| \lesssim E_0 \Big[ (1+|x-at|+\sqrt t)^{-r}+(1+t)^{-\frac{1}{2}}e^{-\frac{|x-at|^2}{Mt}}   \Big ]
\notag
\ee
for sufficiently large $M>0$. Here, we compute $\ds \int_{-\infty}^{\infty} (1+|x-y-at|+\sqrt t)^{-2}(1+|y|)^{-r}dy$ in two cases $|x-at|\leq \sqrt t$ and $|x-at|  > \sqrt t$ similarly as in Lemma \ref{algebraically decaying data}.

\bigskip

For $B$, we set
\be
\hat{\tilde \b}_j:= \frac{1}{\xi+2\pi j}\a^{\frac{1}{2}}(\xi)\widehat{\tilde{\phi}(\xi)\bar u'_j}^*;
\notag
\ee
so
\be
\begin{split}
|B|
& \leq \sum_{j \in \ZZ\setminus\{0\}} \Big|IFT\Big(e^{(-ia\xi-b\xi^2)t}O(|\xi^3|t)\a^{\frac{1}{2}}(\xi)\Big)* \tilde \b_j * e^{-i2\pi j\cdot}\partial_x h_0 \Big|\\
& \leq \Big| IFT\Big(e^{(-ia\xi-b\xi^2)t}O(|\xi^3|t)\a^{\frac{1}{2}}(\xi)\Big)\Big| * \sum_{j \in \ZZ\setminus\{0\}}|\tilde \b_j(x)| * |\partial_x h_0(x)| \\
& \leq \Big| \int_{|\xi| \leq \varepsilon} e^{i\xi x}\Big( \cdots \Big) d\xi +  \int_{\varepsilon \leq |\xi| \leq 2\varepsilon} e^{i\xi x}\Big( \cdots \Big) d\xi \Big| * \sum_{j \in \ZZ\setminus\{0\}}|\tilde \b_j(x)| * |\partial_x h_0(x)| \\
\end{split}
\notag
\ee
Computing similarly as in $A$, $\ds \sum_{j \in \ZZ\setminus\{0\}}|\tilde \b_j(x)| \lesssim (1+|x|)^{-2}$. For the second integration,
\be\label{greater than verep}
\begin{split}
\Big|\int_{\varepsilon \leq |\xi| \leq 2\varepsilon} e^{i\xi x}\Big( \cdots \Big) d\xi \Big|
& \leq  \int_{\varepsilon   \leq |\xi| \leq 2\varepsilon} e^{-b\xi^2 t}O(|\xi|^3t) d\xi \\
& \lesssim  t^{-\frac{1}{2}} e^{-b\frac{\varepsilon^2}{2} t} \leq t^{-\frac{1}{2}}e^{-\tau t} \leq t^{-\frac{1}{2}}e^{-\frac{|x-at|^2}{Mt}}
\end{split}
\ee
for some $\eta>0$ and sufficiently large $M>0$. Here, the last inequality is from the boundedness of  $\ds \frac{|x-at|}{t}$ because $\ds\frac{|x|}{t}$ is bounded, says $\ds \frac{|x-at|}{t} < S_1$ for some $S_1>0$. Indeed, for sufficiently large $M>0$,
\be\label{eta t}
e^{-\frac{|x-at|^2}{Mt}} =e^{-(\frac{|x-at|}{t})^2 \frac{t}{M}}  \geq e^{-\frac{S_1^2}{M}t} \geq e^{-\frac{\eta}{2}t}.
\ee
We now estimate $\ds \Big| \int_{|\xi| \leq \varepsilon} e^{i\xi x}\Big( \cdots \Big) d\xi \Big| $  by using complex contour integrals like \cite{J1}. Since $\ds \frac{|x|}{t}$ is bounded, we define
\be
\bar \a : = \frac{x-at}{2bt}
\notag
\ee
which is bounded and positive (without loss of generality). Thus, we have
\be
\begin{split}
\int_{|\xi| \leq \varepsilon} e^{i\xi x}\Big( \cdots \Big) d\xi
& = \int_{-\varepsilon}^{\varepsilon} e^{i(\xi_1+i\bar \a)(x-at) -b(\xi_1+i\bar \a)^2 t}O(|\xi_1+i\bar \a|^3 t)d\xi_1 \\
& \qquad \qquad + \int_0^{\bar \a} e^{i(\varepsilon+i\xi_2)(x-at) -b(\varepsilon+i\xi_2)^2 t}O(|\varepsilon+i\xi_2|^3 t)d\xi_2
\end{split}
\notag
\ee
which is bounded by $t^{-\frac{1}{2}}e^{-\frac{|x-at|^2}{Mt}}$ for sufficiently large $M>0$. Thus,
\be
\begin{split}
|B|
& \lesssim  \int_{-\infty}^{\infty} \Big[ (1+|x-y-at|+\sqrt t)^{-2}+(1+t)^{-\frac{1}{2}}e^{-\frac{|x-y-at|^2}{Mt}} \Big] |\partial_y h_0(y)| dy \\
& \lesssim E_0 \Big[ (1+|x-at|+\sqrt t)^{-r}+(1+t)^{-\frac{1}{2}}e^{-\frac{|x-at|^2}{Mt}}   \Big ].
\end{split}
\notag
\ee

\bigskip

\textbf{Estimate II, III and IV.} The estimate $II$, $III$ and $IV$ follows similarly. Since $\tilde\phi(\xi,\cdot)-\tilde \phi(0,\cdot)= O(|\xi|)$,
\be
\begin{split}
II
& = \int_{-\pi}^{\pi}e^{i\xi x}e^{\l(\xi)t}\a(\xi)O(1)(i\xi)\hat h_0(\xi) d\xi  \\
& = \int_{-\pi}^{\pi}e^{i\xi x}e^{\l(\xi)t}\a(\xi)O(1)\widehat{\partial_x h_0}(\xi) d\xi \\
& = \int_{-\infty}^\infty  \Big(\int_{-\pi}^{\pi}e^{i\xi (x-y)}e^{\l(\xi)t}\a(\xi)O(1) d\xi \Big) \partial_y h_0(y) dy \\
& = \int_{-\infty}^\infty  \Big(\int_{-\pi}^{\pi}e^{i\xi (x-y)}e^{(-ia\xi-b\xi^2)t}\a(\xi)O(1) d\xi \\
& \qquad \qquad  + \int_{-\pi}^{\pi}e^{i\xi (x-y)}e^{(-ia\xi-b\xi^2)t}O(|\xi|^3t)\a(\xi)O(1) d\xi  \Big) \partial_y h_0(y) dy\\
\end{split}
\notag
\ee
and
\be
\begin{split}
III
& = \int_{-\pi}^{\pi}e^{i\xi x}e^{(-ia\xi-b\xi^2)t}O(|\xi|^3t)\a(\xi)(i\xi)^{-1}i\xi \hat h_0(\xi)d\xi \\
& = \int_{-\infty}^\infty \Big(\int_{-\pi}^{\pi}e^{i\xi x}e^{(-ia\xi-b\xi^2)t}O(|\xi|^2t)\a(\xi)d\xi \Big)  \partial_y h_0(y)dy. \\
\end{split}
\notag
\ee
Computing similarly as in $I$, by complex contour integrals,
\be
\begin{split}
|II+III|
& \leq \int_{-\infty}^{\infty} t^{-\frac{1}{2}}e^{-\frac{|x-y-at|^2}{4bt}} |\partial_y h_0(y)| dy \\
& \lesssim E_0 \Big[ (1+|x-at|+\sqrt t)^{-r}+(1+t)^{-\frac{1}{2}}e^{-\frac{|x-at|^2}{Mt}}   \Big ].
\notag
\end{split}
\ee
Notice that
\be
\begin{split}
IV
& = \int_{-\infty}^{\infty} e^{i\xi x}e^{(-ia\xi-b\xi^2)t}(1-\a(\xi))(i\xi)^{-1}i\xi \hat h_0(\xi)d\xi \\
& = \int_{-\infty}^\infty \Big( \int_{|\xi| \geq \varepsilon }    e^{i\xi (x-y)}e^{(-ia\xi-b\xi^2)t}(1-\a(\xi))(i\xi)^{-1} d\xi \Big) \partial_y h_0(y) dy. \\
\end{split}
\notag
\ee
By \eqref{greater than verep}, we estimate $IV$; so $\ds |IV| \lesssim E_0 \Big[ (1+|x-at|+\sqrt t)^{-r}+(1+t)^{-\frac{1}{2}}e^{-\frac{|x-at|^2}{Mt}}   \Big ]$.

\end{proof}

We now treat $(\tilde S_2(t)(\bar u' h_0))(x)$ in terms of $\partial_x h_0(x)$ for $|x|<<Ct$ with sufficiently large $C>0$.
\begin{proposition}\label{prop tilde S2}
Suppose $|\partial_x h_0(x)| \leq E_0(1+|x|)^{-r}$ for sufficiently small $E_0>0$. Then if $|x|<<Ct$ for sufficiently large $C>0$,
\be
\begin{split}
|(\tilde S_2(t)(\bar u' h_0)(x)|
& \lesssim  \int_{-\infty}^{\infty} \Big[ (1+|x-y-at|+\sqrt t)^{-2}+t^{-\frac{1}{2}}e^{-\frac{|x-y-at|^2}{Mt}} \Big] |\partial_y h_0(y)| dy \\
& \lesssim E_0 \Big[ (1+|x-at|+\sqrt t)^{-r}+(1+t)^{-\frac{1}{2}}e^{-\frac{|x-at|^2}{M't}}   \Big ],
\end{split}
\ee
for sufficiently large $M'>M>0$.
\end{proposition}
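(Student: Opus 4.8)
The plan is to estimate the three integrals constituting $\tilde S_2(t)$ in \eqref{tilde S2} exactly as the four pieces of $\tilde S_1(t)$ were estimated in Proposition \ref{prop tilde S1}: indeed (cf. Remark \ref{difference}) these are the ``non-principal'' pieces $S^p(t)$ of \cite{JNRZ1}, consisting of a high-frequency piece supported in $\eps\le|\xi|\le\pi$, an ``off-diagonal'' piece carrying the stable spectral projection $\a(\xi)\tilde\Pi(\xi)e^{L_\xi t}$, and a ``modulation-error'' piece carrying the factor $q(\xi,x)-q(0,x)=O(|\xi|)$. In each case I would first expand $\bar u'(x)\check h_0(\xi,x)=\sum_{j\in\ZZ}\bar u'(x)e^{i2\pi jx}\hat h_0(\xi+2\pi j)$, then extract from every $j$-summand a factor converting $\hat h_0$ into $\widehat{\partial_x h_0}$, and finally estimate the resulting convolution kernels against $|\partial_y h_0|$.

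To pass to $\partial_x h_0$: for $j\neq0$ the argument $\xi+2\pi j$ is bounded away from $0$, so $\hat h_0(\xi+2\pi j)=(i(\xi+2\pi j))^{-1}\widehat{\partial_x h_0}(\xi+2\pi j)$ and $(\xi+2\pi j)^{-1}$ is absorbed into the periodic coefficient, precisely as in the definition of $\hat\b_j$ in the estimate of $A$ in Estimate I of Proposition \ref{prop tilde S1}; the sum over $j$ is then controlled by Cauchy--Schwarz on Fourier coefficients as there, producing a kernel with $(1+|x-y|)^{-2}$ spatial decay. For $j=0$: in the high-frequency piece write $1-\a(\xi)=(1-\a(\xi))(i\xi)^{-1}\,i\xi$, legitimate since $1-\a$ vanishes near $\xi=0$; in the off-diagonal piece use that $\Pi(0)$ is the eigenprojection onto $\Span\{\bar u'\}$, so $\tilde\Pi(0)\bar u'=0$ and hence $\tilde\Pi(\xi)\bar u'=O(|\xi|)$, allowing an $i\xi$ to be extracted (while $\tilde\Pi(\xi)$ commutes with $e^{L_\xi t}$); in the modulation-error piece the factor $q(\xi,x)-q(0,x)=O(|\xi|)$ already supplies one. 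In every instance $i\xi\,\hat h_0(\xi)=\widehat{\partial_x h_0}(\xi)$ is used in the distributional sense, which is legitimate because $\xi\hat h_0(\xi)$ is the Fourier transform of the localized function $\partial_x h_0$, regardless of the nonlocalized nature of $h_0$.

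For the kernel bounds, the high-frequency piece and all $j\neq0$ contributions involve $e^{L_\xi t}$, respectively $e^{L_\xi t}\tilde\Pi(\xi)$, whose integral kernels on $L^2_{per}([0,1])$ are, by parabolic smoothing together with (D3), respectively (D1) and continuity of the spectrum in $\xi$, bounded by $C(1+t^{-1/2})e^{-\eta t}$ uniformly over the relevant compact $\xi$-range; since the cutoffs make the $\xi$-symbols smooth and compactly supported, the corresponding convolution kernels are rapidly decaying in $x-y$ (in particular $\lesssim(1+|x-y|)^{-2}$) and carry the factor $e^{-\eta t}$, and since $|x|<<Ct$ one has $e^{-\eta t}\lesssim e^{-|x-at|^2/(M't)}$ as in \eqref{eta t}. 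For the modulation-error piece with $j=0$ I would use $e^{\l(\xi)t}=e^{(-ia\xi-b\xi^2)t}(1+O(|\xi|^3 t))$ from \eqref{eigenvalue from 0} and estimate on $|\xi|\le\eps$ by shifting the contour to $\xi+i\bar\a$, $\bar\a=(x-at)/(2bt)$ (bounded because $|x|/t$ is), obtaining $t^{-1/2}e^{-|x-y-at|^2/(Mt)}$ together with the $(1+|x-y|)^{-2}$ decay from two $\xi$-derivatives of the smooth periodic coefficient --- by the same complex-contour computation as in the estimates of $A$ and $B$ in Estimate I of Proposition \ref{prop tilde S1} --- while the transition region $\eps\le|\xi|\le2\eps$ contributes $t^{-1/2}e^{-\eta t}$ exactly as in \eqref{greater than verep}. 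Summing the three pieces yields the first displayed inequality; substituting $|\partial_y h_0(y)|\le E_0(1+|y|)^{-r}$ and applying Lemma \ref{algebraically decaying data} to the Gaussian term and the two-regime estimate of $\int_{-\infty}^\infty(1+|x-y-at|+\sqrt t)^{-2}(1+|y|)^{-r}\,dy$ from the end of the estimate of $A$ in Estimate I (for the algebraic term) yields the second.

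I expect the off-diagonal piece to be the main obstacle: one must upgrade the Bloch-side spectral hypotheses (D1)--(D3) into a quantitative bound showing that $e^{L_\xi t}\tilde\Pi(\xi)$ decays exponentially in $t$ uniformly for small $|\xi|$ (i.e., a uniform resolvent estimate on the spectral complement of the critical eigenvalue $\l(\xi)$), and simultaneously that this family, and the projection $\tilde\Pi(\xi)$, depends on $\xi$ with two locally uniform derivatives so that the Cauchy--Schwarz summation over $j$ and the extraction of the $(1+|x-y|)^{-2}$ spatial decay go through --- together with the identity $\tilde\Pi(0)\bar u'=0$, which is what lets the $j=0$ contribution still display the $\partial_x h_0$ structure. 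None of this introduces a genuinely new estimate beyond those already used for $\tilde S_1(t)$; it is a matter of careful bookkeeping of the projections.
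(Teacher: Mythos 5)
Your proposal is correct and follows essentially the same route as the paper: term-by-term treatment of the three pieces of $\tilde S_2$, conversion of $\hat h_0(\xi+2\pi j)$ into $\widehat{\partial_x h_0}$ via the factors $(\xi+2\pi j)^{-1}$ (with the $j=0$ singularity killed by $1-\a$, by $\tilde\Pi(\xi)\bar u'=O(|\xi|)$, and by $q(\xi,\cdot)-q(0,\cdot)=O(|\xi|)$ respectively), Cauchy--Schwarz summation over $j$, exponential temporal decay from the spectral gap away from the critical mode converted to a Gaussian using $|x|\ll Ct$, and contour shifts for the low-frequency part. If anything you are more explicit than the paper on the one delicate point --- that $\tilde\Pi(0)\bar u'=0$ is what saves the $j=0$ term of the off-diagonal piece --- which the paper subsumes under ``$I$ and $II$ are the same estimations.''
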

\begin{proof}
Similarly as in $I$ of Proposition \ref{prop tilde S1}, re-express \eqref{tilde S2}:
\be
\begin{split}
(\tilde S_2(t)(\bar u' h_0))(x)
& = \sum_{j\in \ZZ}\int_{-\pi}^\pi e^{i\xi x}(e^{L_{\xi}t}(\bar u'e^{i2\pi jx}))(x)\frac{1-\a(\xi)}{i(\xi+2\pi j)}\widehat{e^{i2\pi jx}\partial_xh_0}(\xi) d\xi  \\
& \qquad + \sum_{j\in \ZZ}\int_{-\pi}^\pi e^{i\xi x}(e^{L_{\xi}t}\tilde \Pi(\xi)(\bar u'e^{i2\pi jx}))(x)\frac{\a(\xi)}{i(\xi+2\pi j)}\widehat{e^{i2\pi jx}\partial_xh_0}(\xi) d\xi \\
& \qquad + \sum_{j\in \ZZ} \int_{-\pi}^{\pi} e^{i\xi x}e^{\l(\xi)t} \frac{O(|\xi|)\widehat{\tilde{\phi}(\xi)\bar u'_j}^*\a(\xi)}{i(\xi+2\pi j)}  \widehat{e^{i2\pi jx}\partial_xh_0}(\xi) d\xi  \\
& = I +II +III.
\end{split}
\notag
\ee
Setting $\hat{\bar\b}_j:= \ds\frac{\xi}{\xi+2\pi j}\a^{\frac{1}{2}}(\xi)\widehat{\tilde{\phi}(\xi)\bar u'_j}^*$, we estimate $III$ as before, where we are using the complex contour integrals:
\be
\begin{split}
|III|
& \lesssim   \Big| \int_{-\infty}^{\infty} e^{i\xi x} e^{(-ia\xi-b\xi^2)t}\left(O(1)+O(|\xi|^3t)\right)\a^{\frac{1}{2}}(\xi)d\xi \Big| * \sum_{j \in \ZZ} |\bar{\b_j}(x)|dx * | \partial_x h_0(x) |
\end{split}
\notag
\ee
which is bounded by $\ds\int_{-\infty}^\infty t^{-\frac{1}{2}}e^{-\frac{|x-y-at|^2}{Mt}}|\partial_y h_0(y)|dy$ as before. The only difference compared to term $I$ in Proposition \ref{prop tilde S1} is that the summation contains $j=0$, but it is totally okay because we have $\ds\frac{\xi}{\xi+2\pi j}$ in $\hat{\bar\b}_j$ instead of $\ds\frac{1}{\xi+2\pi j}$.

Now we consider $I$ and $II$ which are same estimations. Re-expressing $I$,
\be
\begin{split}
I
& = \int_{-\pi}^\pi e^{i\xi x}(1-\a(\xi))(e^{L_{\xi}t}(\bar u'\check h_0))(\xi,x)d\xi \\
& = \sum_{j \in \ZZ}\int_{-\pi}^\pi e^{i\xi x}(e^{L_{\xi}t}(\bar u'e^{i2\pi jx}))(x)\frac{1-\a(\xi)}{i(\xi+2\pi j)}\widehat{e^{-i2\pi jx}\partial_xh_0}(\xi)d\xi \\
& = \sum_{j \in \ZZ}\int_{-\infty}^\infty e^{i\xi x}(e^{L_{\xi}t}(\bar u'e^{i2\pi jx}))(x)\frac{(1-\a(\xi))\chi_{[-\pi,\pi]}(\xi)}{i(\xi+2\pi j)}\widehat{e^{-i2\pi jx}\partial_xh_0}(\xi)d\xi.
\end{split}
\notag
\ee
Set
\be
d_j(\xi,x,t):=(e^{L_{\xi}t}(\bar u'e^{i2\pi jx}))(x)
\notag
\ee
which is periodic in $x$ on $[0,1]$ and setting $c_{j,k}(\xi,t)$ are Fourier coefficients of $d_j$, we have
\be
\begin{split}
I
& = \sum_{j,k \in \ZZ}e^{i2\pi kx} \int_{-\infty}^\infty e^{i\xi x}c_{j,k}(\xi) \frac{(1-\a(\xi))\chi_{[-\pi,\pi]}(\xi)}{i(\xi+2\pi j)}\widehat{e^{-i2\pi jx}\partial_xh_0}(\xi)d\xi \\
& =  \sum_{j,k \in \ZZ}e^{i2\pi kx} \int_{-\infty}^\infty e^{i\xi x}c_{j,k}(\xi) \frac{(1-\a(\xi))\chi_{[-\pi,\pi]}(\xi)}{i(\xi+2\pi j)}d\xi * e^{-i2\pi jx}\partial_xh_0(x) \\
& = \sum_{j \in \ZZ} \int_{-\pi}^\pi e^{i\xi x}d_j(\xi,x,t) \frac{1-\a(\xi)}{i(\xi+2\pi j)}d\xi * e^{-i2\pi jx}\partial_xh_0(x), \\
\end{split}
\notag
\ee
and so
\be
|I| \lesssim \sup_{\varepsilon<|\xi|< \pi}\sum_{j \in \ZZ} \Big|d_j(\xi,x,t) \frac{1-\a(\xi)}{i(\xi+2\pi j)}  \Big| * |\partial_x h(x)|;
\notag
\ee
thus, it is enough to estimate $\ds \sum_{j \in \ZZ} \Big|d_j(\xi,x,t) \frac{1-\a(\xi)}{i(\xi+2\pi j)}  \Big|$ independently on $\xi$. By Cauchy-Schwarz inequality,
\be
\sum_{j \in \ZZ} \Big|d_j(\xi,x,t) \frac{1-\a(\xi)}{i(\xi+2\pi j)}  \Big| \leq \sqrt{\sum_{j \in \ZZ}\frac{1}{(1+|j|)^2}\sum_{j \in \ZZ}|d_j(\xi,x,t)|^2 } \leq C\sqrt{\sum_{j \in \ZZ}|d_j(\xi,x,t)|^2}.
\notag
\ee
Noting that $Re\sigma(\L_{\xi})\leq -\eta <0$ for any $|\xi| \geq \varepsilon$, we re-define the sector as $\Omega \cap \{Re \l \leq -\eta \}$ independently on $\xi$ and set
\be
\Gamma=\partial(\Omega \cap \{Re \l \leq -\eta \}).
\notag
\ee
Then we have
\be
\begin{split}
\sum_{j \in \ZZ}|d_j(\xi,x,t)|^2
& = \sum_{j \in \ZZ}\Big|(e^{L_{\xi}t}(\bar u'e^{i2\pi j \cdot}))(x)\Big|^2 \\
& = \sum_{j \in \ZZ} \Big | \int_{\Gamma}e^{\l t}(L_{\xi}-\l)^{-1}(\bar u'(x)e^{i2\pi j(x)}) d\l \Big|^2 \\
& = \sum_{j \in \ZZ} \Big | \int_{\Gamma}e^{\l t}\left(\int_0^1 [G_{
\xi,\l}(x,z)]\bar u'(z)e^{i2\pi jz}dz\right) d\l \Big|^2 \\
& \leq  \left( \sum_{j \in \ZZ} \Big | \int_{\Gamma}e^{\l t}\left(\int_0^1 [G_{
\xi,\l}(x,z)]\bar u'(z)e^{i2\pi jz}dz\right) d\l \Big| \right)^2,  \\
\end{split}
\notag
\ee
where the brackets $[\cdot]$ denote the periodic extensions of the given function onto the whole line. Since  $[G_{\xi,\l}(x,z)]\bar u'(z)$ is periodic in $z$ on $[0,1]$, let's set
\be
h_{j}(\xi,x,\l):=\int_0^1 e^{-i2\pi jz}[G_{\xi,\l}(x,z)]\bar u'(z)dz
\notag
\ee
which are Fourier coefficients of $[G_{\xi,\l}(x,z)]\bar u'(z)$. Recall that  $|G_{\xi,\l}(x,z)|\leq C|\l|^{-\frac{1}{2}}$ and $|\partial_zG_{\xi,\l}(x,z)| \leq C$ in \cite{J1}, for $|\l|>R$, $R$ sufficiently large $R$, and  $|G_{\xi,\l}(x,z)|, |\partial_zG_{\xi,\l}(x,z| \leq C$, for $|\l|<R$. Then we have
\be
\begin{split}
\sum_{j \in \ZZ}|h_j^*(\xi,x,\l)|
& \leq C\sqrt{\sum_{j \in \ZZ }(1+|j|)^{-2}\sum_{j \in \ZZ }(1+|j|^2)|h_j(\xi,x,\l)|^2} \\
& \leq C\|[G_{\xi,\l}(x,z)]\bar u'(z)\|_{H^1{\{ z;[0,1]} \}} \\
& \leq C,
\end{split}
\notag
\ee
where * denote complex conjugate.
Thus we have
\be
\begin{split}
\sqrt{\sum_{j \in \ZZ}|d_j(\xi,x,t)|^2}
& =  \int_{\Gamma} e^{Re \l t} \sum_{j \in \ZZ}|h_j(\xi,x,\l)|d\l \\
& \leq Ce^{-\eta t}\int_0^\infty e^{-\theta kt}dk \\
& \leq Ct^{-1}e^{-\eta t} \\
& \leq Ct^{-1}e^{-\frac{\eta t}{2}}e^{-\frac{|x-at|^2}{Mt}},
\end{split}
\notag
\ee
for some $\eta>0$ and large $M>0$. Here, the last inequality is from \eqref{eta t} again.
\end{proof}

\section{Nonlinear iteration scheme} \label{section NIS}

Recalling the nonlinear perturbation equation \eqref{nonlinear perturbation equation} and \eqref{v}, we now define $\psi(x,t)$ to cancel $E(x,t;y)$ and $\bar u's_*^p(t)$ in $S(t)v_0$ and $S(t)\bar u'h_0$, respectively,
\be \label{psi}
\psi(x,t):= s_*^p(t)(\bar u' h_0)+\int_{-\infty}^\infty E(x,t;y)v_0(y)dy + \int_0^t \int_{-\infty}^{\infty} E(x,t-s;y)\mathcal{N}(y,s)dyds.
\ee
Since there is a cutoff function in $E$, $\psi(x,0)=h_0(x)$ and so we have a new integral representation of $v(x,t)$:
\be \label{v2}
v(x,t)= \tilde S_*(t)(\bar u' h_0)+\int_{-\infty}^{\infty}\tilde G(x,t;y)v_0(y)dy+\int_0^t\int_{-\infty}^{\infty} \tilde G(x,t-s;y)\mathcal{N}(y,s)dyds.
\ee

\begin{remark}
Similarly as in the localized case, we define $\psi$ as ``bad" terms which have not enough decay rates in the solution operator $S(t)$ to close a nonlinear iteration. One can actually see $\psi_x \sim v$. Since $\mathcal{N}$ consists of $v$ and derivatives of $\psi$, by \eqref{psi} and \eqref{v2}, we prove Theorem \ref{main theorem} in the next section.
\end{remark}

\section{Nonlinear stability} \label{section NS}

We now prove the main theorem, starting with the following lemma.

\begin{lemma}\label{lemma zeta}
For $r \geq \frac{3}{2}$ and sufficiently small $E_0>0$, we assume
\be
\begin{split}
 |\tilde u_0(x -h_0(x))-\bar u(x)| + & \sum_{k=1, 2} |\partial_x^k h_0(x)|+|h_0(x)-h_{\pm \infty}|  \leq E_0(1+|x|)^{-r}, \\
|h_{+\infty}|=|h_{-\infty}|\leq E_0  & \quad \text{and} \quad v_0:=\tilde u_0(x -h_0(x))- \bar u(x)  \in H^2(\RR).
\end{split}
\notag
\ee
For $v$ and $\psi$ defined in Section \ref{section NIS}, we define
\be
\zeta(t):=\sup_{0\leq s \leq t, ~ x \in \RR} |(v,\psi_t, \psi_x, \psi_{xx})(x,s)|\Big[ (1+|x-as|+\sqrt s)^{-r} + (1+s)^{-\frac{1}{2}}e^{-\frac{|x-as|^2}{Ms}}\Big]^{-1},
\notag
\ee
for sufficiently large $M>0$. Then for all $t\geq 0$ for which $\zeta(t)$ is finite, we have
\be
\zeta(t) \leq C(E_0+\zeta(t)^2)
\notag
\ee
for some constant $C>0$.
\end{lemma}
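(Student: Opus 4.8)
The plan is to substitute the integral representations \eqref{psi}, \eqref{v2} into the definition of $\zeta(t)$ and bound each term by the template weight
\[
\Theta(x,t):=(1+|x-at|+\sqrt t)^{-r}+(1+t)^{-\frac12}e^{-\frac{|x-at|^2}{Mt}},
\]
so that the whole right-hand side is $\lesssim E_0+\zeta(t)^2$. I would organize the estimate into three blocks mirroring the three summands of \eqref{v2} (and the three summands of the $\psi$-derivatives obtained by differentiating \eqref{psi}). First, the \emph{modulational linear term}: for $v$ itself this is $\tilde S_*(t)(\bar u'h_0)=\bigl(\tilde S_1(t)+\tilde S_2(t)\bigr)(\bar u'h_0)$ in the region $|x|\ll Ct$, handled by Propositions \ref{prop tilde S1} and \ref{prop tilde S2}, and $\tilde S_*(t)(\bar u'h_0)$ in the region $|x|\gg Ct$, handled by Proposition \ref{prop S bigger than Ct}; together these give $|\tilde S_*(t)(\bar u'h_0)|\lesssim E_0\,\Theta(x,t)$ using the hypotheses on $h_0-h_{\pm\infty}$, $\partial_x h_0$, $\partial_x^2 h_0$, $h_{\pm\infty}$. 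For the $\psi$-derivatives, the corresponding piece is $\partial_x s^p_*(t)(\bar u'h_0)$, $\partial_x^2 s^p_*(t)(\bar u'h_0)$, $\partial_t s^p_*(t)(\bar u'h_0)$, which by \eqref{first x derivative of psi}--\eqref{first t derivative of psi} are bounded by Gaussian convolutions against $|\partial_y h_0|+|\partial_y^2 h_0|\lesssim E_0(1+|y|)^{-r}$, hence by $E_0\,\Theta(x,t)$ via Lemma \ref{algebraically decaying data}.

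Second, the \emph{localized linear term}: $\int\tilde G(x,t;y)v_0(y)\,dy$ for $v$, and $\int E(x,t;y)v_0(y)\,dy$ (together with its $x$- and $t$-derivatives) for $\psi$. Using the bounds of Theorem \ref{PGB} — $|\tilde G|\lesssim\bigl((1+t)^{-1}+t^{-1/2}e^{-\eta t}\bigr)e^{-|x-y-at|^2/(Mt)}$, and for $E$ the explicit Gaussian $\frac{1}{\sqrt{4\pi bt}}e^{-|x-y-at|^2/(4bt)}\tilde q(y,0)\chi(t)$ whose $x$-derivative costs a factor $t^{-1/2}|x-y-at|/t$ absorbable into the Gaussian, and whose $t$-derivative is similar — together with the hypothesis $|v_0(y)|\lesssim E_0(1+|y|)^{-r}$ and Lemma \ref{algebraically decaying data}, each of these is $\lesssim E_0\,\Theta(x,t)$; the extra $(1+t)^{-1}$ (vs.\ $t^{-1/2}$) in $\tilde G$ only helps.

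Third, the \emph{Duhamel nonlinear term}: $\int_0^t\!\!\int\tilde G(x,t-s;y)\mathcal N(y,s)\,dy\,ds$ for $v$ and the analogous integral with $E$ (and its derivatives) for $\psi$. Here I first bound the source: by \eqref{Q}--\eqref{T}, $\mathcal N=Q+R_x+(\partial_x^2+\partial_t)Z+T$ with $Q=O(|v|^2)$, and $R,Z,T=O(|v|\,|\psi_x|)+O(|v|\,|\psi_t|)+O(|v|\,|\psi_{xx}|)+O((|\bar u_x|+|v_x|)|\psi_x|^2)$; so all quadratic terms are controlled by $\zeta(t)^2\,\Theta(y,s)^2$ (with the remark $\psi_x\sim v$ folding $\bar u_x\psi_x^2$ into the $v$-quadratic class, and the $v_x$-term needing the $H^2$ bound on $v$), and the derivative-falling-on-$\tilde G$/$E$ terms $R_x$, $\partial_x^2 Z$ are handled by integrating the $y$-derivative onto the kernel, which costs $(t-s)^{-1}$ near $s=t$ — integrable after pairing with $\Theta(y,s)^2$ — while $\partial_t Z$ is handled by writing $\partial_t[\int E(x,t-s;y)Z\,dy]$ and moving the $s$-derivative, producing a boundary term plus a $\partial_s$ term absorbed similarly (this is the standard device from \cite{JZ}). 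The remaining work is the convolution-type bookkeeping lemma
\[
\int_0^t\!\!\int_{-\infty}^\infty \bigl((1+t-s)^{-1}+(t-s)^{-\frac12}e^{-\eta(t-s)}\bigr)e^{-\frac{|x-y-a(t-s)|^2}{M(t-s)}}\,\Theta(y,s)^2\,dy\,ds\ \lesssim\ \Theta(x,t),
\]
and its $E$-kernel analogue, which is exactly the reason $r\geq\frac32$ is imposed: the power $2r\geq3>2$ is what makes the $y$-integral of $\Theta(y,s)^2$ decay fast enough in $s$ (and in $|x-at|$) to reproduce $\Theta(x,t)$. These are the same integral estimates used in \cite{J1,JZ,JNRZ1}; I would cite or reproduce them in the next section. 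Taking the supremum over $0\le s\le t$ and $x\in\RR$ of the sum of the three blocks divided by $\Theta(x,t)$ then yields $\zeta(t)\le C(E_0+\zeta(t)^2)$.

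\textbf{Main obstacle.} The routine-looking but genuinely delicate step is the Duhamel bookkeeping: propagating the pointwise template $\Theta$ through the time-convolution while correctly accounting for the derivative losses in $R_x$, $\partial_x^2 Z$, $\partial_t Z$ (moving derivatives onto the kernel $\tilde G$ or $E$, and in the $E$-case using its explicit Gaussian form so the derivative cost is a clean $(t-s)^{-1/2}\cdot|x-y-a(t-s)|/(t-s)$), and checking that the $v_x$-containing term in $R$ does not spoil the iteration — this is where the $v_0\in H^2$ hypothesis and parabolic smoothing must be invoked, and where one must be careful that the phase $\psi$ defined in \eqref{psi} is genuinely in $W^{2,\infty}$ with the claimed decay so that $1-\psi_x$ stays bounded away from zero. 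Everything else reduces to Lemma \ref{algebraically decaying data} and Theorem \ref{PGB}.
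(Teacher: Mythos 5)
Your proposal matches the paper's proof in both structure and substance: the modulational linear term is handled by Propositions \ref{prop S bigger than Ct}, \ref{prop tilde S1}, \ref{prop tilde S2} (and \eqref{first x derivative of psi}--\eqref{first t derivative of psi} for the $\psi$-derivatives), the localized linear term by Theorem \ref{PGB} together with Lemma \ref{algebraically decaying data}, and the Duhamel term by the quadratic bound on $(Q,R,Z,T)$ (using the $L^\infty$ control of $v_x$ from \cite{JNRZ1}), integration by parts onto $\tilde G_y$, and the same convolution bookkeeping with the split at $s=t/2$ where $r\geq\frac32$ enters. This is essentially the paper's argument.
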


\smallskip

\begin{proof} For any $0\leq s \leq t$, applying Propositions \ref{prop S bigger than Ct}, \ref{prop tilde S1} and \ref{prop tilde S2} to the integral representation of $v$ in \eqref{v2},
\be
|\tilde S_*(s)(\bar u'h_0)|\leq CE_0\Big[(1+|x-as|+\sqrt s)^{-r} + (1+s)^{-\frac{1}{2}}e^{-\frac{|x-as|^2}{Ms}}\Big]
\notag
\ee
for any $x\in \RR$; so then for any $0\leq s \leq t$ and any $x\in \RR$,
\be
|\tilde S_*(s)(\bar u'h_0)|\Big[(1+|x-as|+\sqrt s)^{-r} + (1+s)^{-\frac{1}{2}}e^{-\frac{|x-as|^2}{Ms}}\Big]^{-1} \leq CE_0.
\notag
\ee
Reminding the pointwise bounds of $\tilde G(x,t;y)$ in Theorem \ref{PGB}, we have for any $0\leq s \leq t$ and any $x\in \RR$,
\be
\begin{split}
\Big| \int_{-\infty}^\infty \tilde G(x,s;y)v_0(y)dy \Big|
& \leq \int_{-\infty}^\infty |\tilde G(x,s;y)||v_0(y)|dy \\
& \leq  \int_{-\infty}^\infty \Big((1+s)^{-1}+s^{-\frac{1}{2}}e^{-\eta s}\Big) e^{-\frac{|x-y-as|^2}{Ms}} (1+|y|)^{-r} dy \\
& \leq CE_0\Big[(1+|x-as|+\sqrt s)^{-r} + (1+s)^{-\frac{1}{2}}e^{-\frac{|x-as|^2}{Ms}}\Big].
\end{split}
\notag
\ee
Recalling $\eqref{Q} \sim \eqref{T}$ and applying the boundedness of $|v_x|_{L^{\infty}}$ in the main theorem of \cite{JNRZ1} to $\mathcal{N}$ in \eqref{v2}, we have
\be
\begin{split}
|(Q, R, S, T)(x,s)|
& \leq C|(v, \psi_t, \psi_{x}, \psi_{xx})(x,s)|^2 \\
& \leq C\zeta^2(t)\Big[ (1+|x-as|+\sqrt s)^{-r} + (1+s)^{-\frac{1}{2}}e^{-\frac{|x-as|^2}{Ms}} \Big]^{2}.
\end{split}
\notag
\ee
By using integration by parts in the third term of $v$,
\be
\begin{split}
& \Big| \int_0^t \int_{-\infty}^\infty \tilde G(x, t-s;y)\mathcal{N}(y,s)dyds \Big| \\
& \leq \int_0^t \int_{-\infty}^\infty |\tilde G_y(x, t-s;y)||(Q, R, S, T)(y,s)|dyds \\
& \leq C\zeta^2(t)\int_0^t \int_{-\infty}^\infty (t-s)^{-1}e^{-\frac{|x-y-a(t-s)|^2}{M(t-s)}}(1+|y-as|+\sqrt s)^{-2r}dyds\\
& \quad \quad + C\zeta^2(t)\int_0^t \int_{-\infty}^\infty (t-s)^{-1}e^{-\frac{|x-y-a(t-s)|^2}{M(t-s)}}(1+s)^{-1}e^{-\frac{|y-as|^2}{Ms}}dyds\\
& = C\zeta^2(t)(I + II).
\end{split}
\notag
\ee
Noting first that
\be
\int_{-\infty}^\infty e^{-\frac{|x-y-a(t-s)|^2}{M(t-s)}}e^{-\frac{|y-as|^2}{Ms}}dy \leq Ct^{-\frac{1}{2}}s^{\frac{1}{2}}(t-s)^{\frac{1}{2}} e^{-\frac{|x-at|^2}{Mt}},
\notag
\ee
we easily estimate $II$ as
\be
II \leq e^{-\frac{|x-at|^2}{Mt}}  \int_{0}^{t} (t-s)^{-\frac{1}{2}}(1+s)^{-\frac{1}{2}} t^{-\frac{1}{2}} ds \lesssim (1+t)^{-\frac{1}{2}} e^{-\frac{|x-at|^2}{Mt}}.
\notag
\ee
We now estimate $I$ by separating $\ds\int_0^t$ into $\ds\int_{0}^{t/2}$ and $\ds\int_{t/2}^{t}$. By Lemma \ref{algebraically decaying data}, we have
\be
\begin{split}
&\int_{-\infty}^\infty (t-s)^{-\frac{1}{2}}e^{-\frac{|x-(y-as)-at|^2}{M(t-s)}}(1+|y-as|)^{-r} dy \\
& \quad \quad \lesssim (1+|x-at|+\sqrt{t-s})^{-r}+(1+t-s)^{-\frac{1}{2}} e^{-\frac{|x-at|^2}{M(t-s)}};
\end{split}
\notag
\ee
so then
\be
\begin{split}
\int_{0}^{t/2}
& \lesssim \int_{0}^{t/2} (1+s)^{-\frac{r}{2}}(t-s)^{-\frac{1}{2}}\Big[(1+|x-at|+\sqrt{t-s})^{-r}+(1+t-s)^{-\frac{1}{2}} e^{-\frac{|x-at|^2}{M(t-s)}}\Big] ds \\
& \lesssim \Big[(1+|x-at|+\sqrt t)^{-r}+(1+t)^{-\frac{1}{2}}e^{-\frac{|x-at|^2}{Mt}}\Big]\int_{0}^{t/2}(1+s)^{-\frac{r}{2}}(t-s)^{-\frac{1}{2}} ds \\
& \lesssim (1+|x-at|+\sqrt t)^{-r}+(1+t)^{-\frac{1}{2}}e^{-\frac{|x-at|^2}{Mt}}.
\end{split}
\notag
\ee
In order to estimate $\ds\int_{t/2}^{t}$, we re-prove Lemma \ref{algebraically decaying data} as
\be
\begin{split}
& \int_{-\infty}^{\infty} (t-s)^{-\frac{1}{2}}e^{-\frac{|x-(y-as)-at|^2}{M(t-s)}}(1+|y-as|+\sqrt s)^{-2r} dy  \\
& \quad \leq (1+|x-at|+\sqrt s)^{-2r} + (1+t-s)^{-\frac{1}{2}}e^{-\frac{|x-at|^2}{M(t-s)}}\int_0^\infty (1+y+\sqrt s)^{-2r} dy \\
& \quad \leq (1+|x-at|+\sqrt s)^{-2r} + (1+t-s)^{-\frac{1}{2}}e^{-\frac{|x-at|^2}{Mt}}(1+\sqrt s)^{-2r+1}.
\end{split}
\notag
\ee
Since $r\geq \frac{3}{2}$, we obtain
\be
\begin{split}
& \int_{t/2}^{t} (t-s)^{-\frac{1}{2}} \Big[(1+|x-at|+\sqrt s)^{-2r} + (1+t-s)^{-\frac{1}{2}}(1+\sqrt s)^{-2r+1} e^{-\frac{|x-at|^2}{Mt}}\Big] ds \\
& \quad \leq  (1+|x-at|+\sqrt t)^{-r} + (1+t)^{-r+\frac{1}{2}+\frac{1}{2}}e^{-\frac{|x-at|^2}{Mt}} \\
& \quad \leq  (1+|x-at|+\sqrt t)^{-r} + (1+t)^{-\frac{1}{2}}e^{-\frac{|x-at|^2}{Mt}}.
\end{split}
\notag
\ee
Similarly, by $\eqref{first x derivative of psi} \sim\eqref{first t derivative of psi}$ and Theorem \ref{PGB}, we estimate $\psi_t$, $\psi_x$ and $\psi_{xx}$ as
\be
|(\psi_t, \psi_x, \psi_{xx})(x,s)| \leq C(E_0+\zeta(t)^2)\Big[ (1+|x-as|+\sqrt s)^{-r} + (1+s)^{-\frac{1}{2}}e^{-\frac{|x-as|^2}{Ms}}\Big],
\notag
\ee
for any $0\leq s \leq t$ and $x \in \RR$; thus we complete the proof.
\end{proof}

From here, the proof of Theorem \ref{main theorem} goes similarly as in the localized case.

\begin{proof}[\textbf{Proof of Theorem \ref{main theorem}}]
Without loss of generality, we assume $C>\frac{1}{2}$. Then $\zeta (0) < 2CE_0$. Since $\zeta(t)$ is continuous so long as it remains small, by the continuous induction, $\zeta(t) < 2CE_0$ for all $t\geq 0$ if $E_0<\frac{1}{4C^2}$. Indeed, if $\zeta(t)=2CE_0$, then by Lemma \ref{lemma zeta}, $2CE_0 \leq C(E_0+4C^2E_0^2)$; so $E_0\geq \frac{1}{4C^2}$ which is a contradiction.

\end{proof}

\newpage


\begin{thebibliography}{GMWZ7}






%
%
%



%
%

\bibitem[HZ]{HZ} P. Howard and K. Zumbrun,
\emph{Stability of undercompressive viscous shock waves}, J.
Differential Equations  225  (2006),  no. 1, 308--360.

\bibitem[J1]{J1} S. Jung, {\it Pointwise asymptotic behavior of modulated periodic reaction-diffusion waves}, J. Differential Equations  253  (2012),  no. 6, 1807-1861.

\bibitem[J2]{J2} S. Jung, {\it Pointwise stability estimates for periodic traveling wave solutions of systems of viscous conservation laws}, J. Differential Equations  256  (2014),  no. 7, 2261-2306.

\bibitem
[JNRZ1]{JNRZ1} M. Johnson, P. Noble, M. Rodrigues, and K. Zumbrun,
{\it Nonlocalized Modulation of Periodic Reaction Diffusion Waves: Nonlinear Stability}, Arch. Rational Mech. Anal. 207(2013) 693-715.

\bibitem
[JNRZ2]{JNRZ2} M. Johnson, P. Noble, M. Rodrigues, and K. Zumbrun,
{\it Nonlocalized Modulation of Periodic Reaction Diffusion Waves: The Whitham equation}, Arch. Rational Mech. Anal. 207(2013) 669-692.

\bibitem
[JNRZ3]{JNRZ3} M. Johnson, P. Noble, M. Rodrigues, and K. Zumbrun, {\it Behavior of periodic solutions of viscous conservation laws under localized and nonlocalized perturbations}, Invent. Math.  197  (2014),  no. 1, 115-213.



\bibitem
[JZ]{JZ} M. Johnson and K. Zumbrun, {\it Nonlinear stability of
spatially-periodic traveling-wave solutions of systems of reaction
diffusion equations,} Ann. Inst. H. Poincare Anal. Non Lineaire  28  (2011),  no. 4, 471-483.
%





\bibitem
[S1]{S1} G. Schneider, {\it Nonlinear diffusive stability of
spatially periodic solutions-- abstract theorem and higher space
dimensions}, Proceedings of the International Conference on
Asymptotics in Nonlinear Diffusive Systems (Sendai, 1997),
159--167, Tohoku Math. Publ., 8, Tohoku Univ., Sendai, 1998.

\bibitem
[S2]{S2} G. Schneider, {\it Diffusive stability of spatial periodic
solutions of the Swift-Hohenberg equation,} (English. English
summary) Comm. Math. Phys. 178 (1996), no. 3, 679--702.

\bibitem
[SSSU]{SSSU} B. Sandstede, A. Scheel, G. Schneider, and H. Uecker, {\it Diffusive mixing of periodic wave trains in reaction-diffusion systems,} J. Differential Equations  252  (2012),  no. 5, 3541--3574.






\end{thebibliography}
\end{document}